\documentclass{article}
\usepackage{amsthm}
\newtheorem{theorem}{Theorem}
\newtheorem{corollary}{Corollary}
\newtheorem{proposition}{Proposition}
\newtheorem{lemma}{Lemma}
\newtheorem{remark}{Remark}
\usepackage{fullpage}

\usepackage{amssymb,amsmath}

\usepackage{hyperref}

\usepackage{graphics}
\usepackage{epstopdf}

\usepackage{algorithm}
\usepackage{algorithmic}
\usepackage{epstopdf}
\usepackage{graphicx}
\usepackage{multirow}
\usepackage{color}
\usepackage[colorinlistoftodos,bordercolor=orange,backgroundcolor=orange!20,linecolor=orange,textsize=scriptsize]{todonotes}

\usepackage{pdflscape}
\usepackage{mathtools}

\DeclarePairedDelimiter\ceil{\lceil}{\rceil}

\newcommand{\norm}[1]{\left \lVert #1 \right\rVert}
\def\<#1,#2>{\langle #1,#2\rangle}

                         % such that
\newcommand{\ve}[2]{\langle #1 ,  #2 \rangle}   % inner product
\newcommand{\eqdef}{\stackrel{\text{def}}{=}}
\newcommand{\R}{\mathbb{R}}
\newcommand{\cA}{\mathcal{A}}

\newcommand{\Exp}{\mathbf{E}}
\newcommand{\Prob}{\mathbf{P}}

\newcommand{\bN}{\mathbb{N}}
\newcommand{\cX}{\mathcal{X}}

\newcommand{\dom}{\operatorname{dom}}
\newcommand{\dist}{\operatorname{dist}}

\newtheorem{ass}{Assumption}
%\spdefaulttheorem{ass}{Assumption}{\bf}{}

%\newtheorem{remark}{Remark}

\author{Olivier Fercoq\thanks{Olivier Fercoq, LTCI,
	    T\'el\'ecom ParisTech, Universit\'e Paris-Saclay, Paris,  France,
              \texttt{olivier.fercoq@telecom-paristech.fr}} \and Zheng Qu\thanks{Zheng Qu, Department of Mathematics, The University of Hong Kong,
              Hong Kong, China,   \texttt{zhengqu@maths.hku.hk}}}

%\institute{Olivier Fercoq \at {LTCI, 
%	    T\'el\'ecom ParisTech, Universit\'e Paris-Saclay, Paris,  France \\
%              \email{olivier.fercoq@telecom-paristech.fr}} 
%\and Zheng Qu
% \at {Department of Mathematics, The University of Hong Kong,
%Hong Kong, China \\
%              \email{zhengqu@maths.hku.hk}}
%}

%\date{Received: date / Accepted: date}

\newcommand{\TheTitle}{Restarting the accelerated coordinate descent method with a rough strong convexity estimate} 
 
%\newcommand{\TheAuthors}{O. Fercoq and Z. Qu}

% Sets running headers as well as PDF title and authors
%\headers{\ShortTitle}{\TheAuthors}

\title{\TheTitle\thanks{The first author's work was supported by the
EPSRC  Grant EP/K02325X/1
{\em Accelerated Coordinate Descent Methods for Big Data Optimization},
the Centre for Numerical Algorithms and Intelligent Software (funded by EPSRC grant EP/G036136/1 and the Scottish Funding Council), the Orange/Telecom ParisTech think tank Phi-TAB, the ANR grant ANR-11-LABX-0056-LMH, LabEx LMH as part of the Investissement d'avenir project. 
The second author's work was supported by Hong Kong Research Grant Council 27302016. This research was partially conducted using the HKU Information Technology Services research computing facilities that are supported in part by the Hong Kong UGC Special Equipment Grant (SEG HKU09).}}

%\titlerunning{\ShortTitle}

\begin{document}

\maketitle

\begin{abstract}
We propose new restarting strategies for the accelerated coordinate descent method.
Our main contribution is to show that for a well chosen sequence of  restarting times, the restarted method has a nearly geometric rate
of convergence. A major feature of the method is that it can take profit of the local quadratic error bound of the objective function without knowing the actual
value of the error bound. 
We also show that under the more restrictive assumption that the objective function is strongly convex, any fixed restart period leads to a geometric rate of convergence.
Finally, we illustrate the properties of the algorithm on a regularized logistic regression problem and on a Lasso problem.

%\keywords{Accelerated coordinate descent \and restarting strategies \and unknown strong convexity \and local quadratic error bound}
\end{abstract}

\section{Introduction}

\subsection{Motivation}

We consider in this paper the minimization of composite
convex functions of the form 
$$F(x) = f(x) + \psi(x),\enspace x\in \R^n$$
where $f$ is differentiable with Lipschitz gradient 
and $\psi$ may be nonsmooth but is separable, and has an easily computable proximal operator.  
Coordinate descent methods are often considered in this context thanks to the separability of the proximal operator of~$\psi$.
These are optimization algorithms that update only one coordinate of the
 vector of variables at each iteration, hence using partial 
 derivatives rather than the whole gradient. 

Similarly to what he had done for the gradient method, 
Nesterov introduced, for smooth functions, the randomized accelerated coordinate descent method with
an improved guarantee on the iteration complexity~\cite{Nesterov:2010RCDM}.
Indeed, for a mild additional computational cost, accelerated methods transform 
the proximal coordinate descent method, for which the optimality gap 
$F(x_k) - F^*$ decreases as $O(1/k)$, into an algorithm with 
``optimal'' $O(1/k^2)$ complexity~\cite{nesterov1983method}.  
%Lee and Sidford~
\cite{lee2013efficient} introduced an efficient implementation of the method 
and %Fercoq and Richt\'arik~
\cite{FR:2013approx} developed the accelerated parallel and proximal coordinate descent method (APPROX) for the minimization of composite functions.

When solving a strongly convex problem, the classical
(non-accelerated) gradient and coordinate descent methods
automatically have a linear rate of convergence,
i.e. $F(x_k) - F^* \in O((1-\mu)^k)$
for a problem dependent $0<\mu<1$, whereas one needs to know explicitly the strong convexity parameter in order
to set accelerated gradient and accelerated coordinate descent methods to have a linear rate of convergence, 
see for instance~\cite{lee2013efficient,UniversalCatalyst,lin2014accelerated,Nesterov:2010RCDM,nesterov2013gradient}. 
Setting the algorithm with an incorrect parameter may result in a slower algorithm, sometimes even slower than if
we had not tried to set an acceleration scheme~\cite{o2012adaptive}.
This is a major drawback of the method because in general, the strong convexity parameter is
difficult to estimate. 

In the context of accelerated gradient method with unknown strong convexity parameter, Nesterov~\cite{nesterov2013gradient} proposed a restarting scheme which adaptively approximate the strong convexity parameter. The same idea was exploited in~\cite{Lin2015} for sparse optimization.
%Nesterov~
\cite{nesterov2013gradient} also showed that,
instead of deriving a new method designed to work better for strongly convex functions,
one can restart the accelerated gradient method and get a linear convergence rate. 
It was later shown in~\cite{LiuYang17,FercoqQuDeter17} that 
a local quadratic error bound is sufficient to get a global linear rate of convergence.

The adaptive restart of randomized accelerated coordinate descent methods is more complex than in the 
deterministic case. As the complexity bound holds in expectation only, one cannot rely 
on this bound to estimate whether the rate of convergence is in line with our estimate of the
local error bound, as was done in the deterministic case. Instead, \cite{fercoqqu2016restarting}
proposed a fixed restarting scheme. They needed to restart at a point which is a convex
combination of all previous iterates and required stronger assumptions than in the present work.

\subsection{Contributions}
 
In this paper, we show how restarting the accelerated coordinate
descent method can help us take profit of the local quadratic
error bound of the objective, when this property holds. 

We consider three setups: 
\begin{enumerate}
\item If the local quadratic
error bound coefficient $\mu$ of the objective function is known, then we show that setting
a restarting period as $O(1/\sqrt{\mu})$ yields an algorithm
with optimal rate of convergence. More precisely restarted APPROX admits the same theoretical complexity bound as the accelerated coordinate descent methods for strongly convex functions developed in~\cite{lin2014accelerated}, is applicable with milder assumptions and exhibits better performance in numerical experiments.

\item If the objective function is strongly convex, we
show that we can restart the accelerated coordinate descent method 
at {\em any} frequency and get a linearly convergent algorithm. 
The rate depends on an estimate of the local quadratic error bound 
and we show that for a wide range of this parameter,
one obtains a faster rate than without acceleration.
In particular, we do not require the estimate
of the error bound coefficient to be smaller than
the actual value. The difference with respect to \cite{fercoqqu2016restarting}
is that in this section, we show that there is no need to restart
at a complex combination of previous iterates.

\item If the local error bound coefficient is not known, 
we introduce a variable restarting periods and show that up to a $\log(\log 1/\epsilon)$ term, the algorithm
is as efficient as if we had known the local error bound coefficient.
\end{enumerate}

In Section~\ref{sec:basic-approx} we recall
the main convergence results for the accelerated proximal coordinate descent method (APPROX) and present restarted APPROX.
In Section~\ref{sec:restart}, we study restart for APPROX with a fixed restart period and in Section~\ref{sec:variable_restart} we give the algorithm with variable restarting periods. Finally, we present numerical experiments
on the lasso and logistic regression problem in Section~\ref{sec:expe}.

%\begin{itemize}
 %\item We define an accelerated coordinate descent for coordinate descent on composite problems.
%Like in~\cite{Nesterov:2010RCDM,lee2013efficient,lin2014accelerated}, the algorithm depends explicitly on
%an  estimate of the strong convexity coefficient of the function. 
%However, unlike previous works, we give a theoretical analysis in the case when this estimate is 
%larger than the actual value. This shows that this parameter is important but that convergence will still occur even in an ill setting.
%\item We propose a restarting strategy, similar to what was proposed for accelerated gradient methods~\cite{o2012adaptive},
%that leads to a comparable iteration complexity.
%\item We develop a heuristic for an adaptive restarting of the algorithm, inspired by~\cite{o2012adaptive},
%which allows us to take profit of restarting even when we do not know the strong convexity coefficient.
%\end{itemize}

\section{Problems, assumptions and algorithms}
\label{sec:basic-approx}

In this section, we present in detail the problem we are studying. We also 
recall basic facts about the accelerated coordinate descent method that will 
be useful in the analysis.

\subsection{Problem and assumptions}
\label{sec:problem}

For simplicity we present the algorithm in  coordinatewise form. The extension to blockwise setting follows naturally (see for instance~\cite{FR:2013approx}).
We consider the following optimization problem:
\begin{equation}\label{eq-prob}
\begin{array}{ll}
 \mathrm{minimize} & F(x):=f(x)+\psi(x)\\
\mathrm{subject~to~} & x=(x^1,\dots,x^n)\in \R^n,
\end{array}
\end{equation}
where $f:\R^n\rightarrow \R$ is a differentiable convex function and $\psi:\R^n\rightarrow \R\cup\{+\infty\}$ is a closed convex  and separable function:
$$
\psi(x)=\sum_{i=1}^n\psi^i(x^i).
$$
Note that this implies that each function $\psi^i:\R\rightarrow \R$ is closed and convex. 
We denote by $F^*$ the optimal value of~\eqref{eq-prob} and assume that the optimal solution set $\cX^*$ is nonempty.  For any positive vector $v\in \R^n_+$, we denote by $\|\cdot\|_v$ the weighted  Euclidean norm:
 \[\|x\|_v^2 \eqdef \sum_{i=1}^n v_i (x^i)^2,\]
and $\dist_v(x, \cX^*)$ the distance of $x$ to the set $\cX^*$ with respect to the norm
$\|\cdot\|_v$.

Throughout the paper, we will assume that the objective function satisfies the
following local error bound condition.
\begin{ass}\label{ass2}
For any $x_0\in \dom(F) $, there is $\mu>0$ such that 
\begin{align}\label{a:strconv2}
F(x)\geq F^* +\frac{\mu}{2}\dist_v(x, \cX^*)^2, \enspace \forall x\in [F\leq F(x_0)],
\end{align}
where $[F\leq  F(x_0)]$ denotes the set of all $x$ such that $F(x)\leq F(x_0)$.
\end{ass}
We denote by $\mu(v,x_0)$ the largest $\mu>0$ satisfying~\eqref{a:strconv2} and by $\mu(x_0)$ the value of $\mu(\vec{e},x_0)$ where $\vec{e}$ is the unit vector. Note that   
\begin{align}\label{a:muv}\min_i \mu(x_0)/v_i \leq \mu(v,x_0)\leq\max_i \mu(x_0)/v_i.
\end{align}

The fact that we assume that~\eqref{a:strconv2} holds for any $x_0 \in \dom(F)$ is not restrictive. The proposition below shows that, if it holds for a given $x_0$, it will hold on any bounded set.
\begin{proposition}
\label{prop:localerrorboundonallcompacts}
If $F$ is convex and satisfies the local error bound Assumption~\ref{ass2}
then for all $M \geq 1$, if $F(x') - F^* = M (F(x_0) - F^*)$, then $\mu(v, x') \geq \mu(v, x_0)/M$.
\end{proposition}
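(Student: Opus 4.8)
The plan is to transfer the error bound from the sublevel set $[F\le F(x_0)]$ to the larger sublevel set $[F\le F(x')]$ by a convexity-and-scaling argument, exploiting that along a segment pointing towards $\cX^*$ the optimality gap $F(\cdot)-F^*$ decreases at most linearly while the squared distance $\dist_v(\cdot,\cX^*)^2$ decreases quadratically, so their ratio stays controlled. First I would dispose of the trivial case: if $x\in[F\le F(x_0)]$, the error bound at $x_0$ already gives $F(x)-F^*\ge\frac{\mu(v,x_0)}{2}\dist_v(x,\cX^*)^2\ge\frac{\mu(v,x_0)/M}{2}\dist_v(x,\cX^*)^2$ since $M\ge 1$. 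Hence it suffices to treat an $x$ with $F(x_0)<F(x)\le F(x')$.

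For such an $x$, let $x^*$ be the projection of $x$ onto $\cX^*$ for the norm $\|\cdot\|_v$; this is well defined because $\cX^*=[F\le F^*]$ is closed (as a sublevel set of the closed convex function $F$), convex, and nonempty. Consider the segment $z(t)=(1-t)x^*+tx$. Since $F(z(0))=F^*\le F(x_0)<F(x)=F(z(1))$ and $F$ is continuous along this segment, there is $t^*\in(0,1)$ with $F(z(t^*))=F(x_0)$; write $z^*=z(t^*)\in[F\le F(x_0)]$. Convexity of $F$ gives $F(z^*)-F^*\le t^*(F(x)-F^*)$, which I would use twice: rearranged it reads $F(x)-F^*\ge (F(x_0)-F^*)/t^*$, and together with $F(x)-F^*\le F(x')-F^*=M(F(x_0)-F^*)$ it forces $t^*\ge 1/M$.

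The key geometric step is that $x^*$ is simultaneously the projection of $z^*$ onto $\cX^*$. This follows from the variational characterization of the projection: $x^*$ satisfies $\langle x-x^*,\,y-x^*\rangle_v\le 0$ for all $y\in\cX^*$, and since $z^*-x^*=t^*(x-x^*)$ is a positive multiple of $x-x^*$, the same inequality holds with $z^*$ in place of $x$, identifying $x^*$ as the projection of $z^*$. Consequently $\dist_v(z^*,\cX^*)=\|z^*-x^*\|_v=t^*\dist_v(x,\cX^*)$ \emph{exactly}. Applying the error bound at $z^*\in[F\le F(x_0)]$ then yields $F(x_0)-F^*=F(z^*)-F^*\ge\frac{\mu(v,x_0)}{2}(t^*)^2\dist_v(x,\cX^*)^2$, and combining this with $F(x)-F^*\ge(F(x_0)-F^*)/t^*$ and $t^*\ge 1/M$ chains into $F(x)-F^*\ge\frac{\mu(v,x_0)\,t^*}{2}\dist_v(x,\cX^*)^2\ge\frac{\mu(v,x_0)/M}{2}\dist_v(x,\cX^*)^2$, which is exactly the asserted bound and shows $\mu(v,x')\ge\mu(v,x_0)/M$.

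I expect the main obstacle to be precisely the projection identity $\dist_v(z^*,\cX^*)=t^*\dist_v(x,\cX^*)$: convexity of the distance-to-$\cX^*$ function only delivers the inequality $\dist_v(z^*,\cX^*)\le t^*\dist_v(x,\cX^*)$, which is the wrong direction to feed the error bound at $z^*$ back into an estimate at $x$. Obtaining the matching lower bound, hence the equality, is what requires the observation that $x^*$ projects the whole segment $[x^*,x]$, and this is the one place where the argument is not a routine manipulation of the convexity inequalities.
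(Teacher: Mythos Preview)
Your proof is correct and follows essentially the same approach as the paper: move along the segment from $x$ towards its projection onto $\cX^*$ until you enter the smaller sublevel set, apply the error bound there, and transfer back using convexity together with the fact that the projection is shared along the whole segment. The only cosmetic difference is that the paper directly picks the intermediate point at ratio $r/(F(x)-F^*)$ along the segment (convexity then places it in $[F\le F(x_0)]$), whereas you invoke the intermediate value theorem to land exactly on the level $F(x_0)$; the paper also uses the projection identity $\dist_v(\cdot,\cX^*)$ scaling that you single out as the key step, but leaves it implicit.
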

\begin{proof}
Suppose there exists $\mu$, $v$ and $r = F(x_0) - F^*$ such that $\forall x\in [F - F^*\leq r]$,
\begin{align*}
F(x)\geq F^* + \frac{\mu}{2}\dist_v(x, \cX^*)^2 \enspace .
\end{align*}
Let $y \in \dom(F)$ such that $r\leq  F(y) - F^* \leq rM$, $p$ be the orthogonal projection of $y$ onto $\cX^*$ and $x = p + \frac{r}{F(y) - F^*} (y-p)$.
As $F$ is convex,  $
F(x) \leq \frac{r}{F(y) - F^*} F(y) + \big(1-\frac{r}{F(y) - F^*}\big)F(p)
$. Note in particular that $F(x) - F^* \leq r$.
Rearranging, we get
\begin{align*}
F(y) &\geq \frac{F(y) - F^*}{r} F(x) - \big(\frac{F(y) - F^*}{r}-1\big)F^* 
\\&\geq F^* + \frac{F(y) - F^*}{r}\frac{\mu}{2}\dist_v(x, \cX^*)^2\\
& = F^*+\frac{r}{F(y) - F^*}\frac{\mu}{2}\dist_v(y, \cX^*)^2 \geq F^*+\frac{1}{M} \frac{\mu}{2}\dist_v(y, \cX^*)^2
\end{align*}
\end{proof}

Condition~\eqref{a:strconv2} is sometimes referred to as quadratic (functional) growth condition, as it controls the growth of the objective function value by the squared distance between any point in the sublevel set and the optimal set. This geometric property is equivalent to the {\L}ojasiewicz gradient inequality with exponent $1/2$~\cite{Bolte2017}, as well as to the so-called first-order error bound condition which bounds the distance by the norm of the proximal residual~\cite{DrusvyatskLewis}.  
A wide range of stuctured optimization models that arise in application~\cite{NecoaraNesGli} satisfy Assumption~\ref{ass2}.   
The constant $\mu(x_0)$ embodies in some sense the geometrical complexity of the problem. We recall a prototype  for which lower bounds for $\mu(x_0)$ can be deduced. It is a composition of
strongly convex function with linear term under polyhedral constraint:
\begin{align}\label{a:compositionproblem}
\min_{x} F_1(x)\equiv g(Ax)+c^\top x+1_{Cx\leq d }
\end{align}
where $A$, $C$, $c$ and $d$ are matrices with appropriate dimensions and $g$ is a smooth and strongly convex function. In this case the optimal set is polyhedral and can be written as:
\begin{align}\label{a:lis}
\cX^*=\{x: Ax=t^*, c^\top x=s^*, Cx\leq d\},
\end{align} for uniquely determined $t^*$ and $s^*$. It was shown
in~\cite{NecoaraNesGli} that
\begin{align}\label{a:estimationofmu}
\mu_{F_1}(x_0) \geq \frac{\sigma_g}{\theta^2(1+(F_1(x_0)-F_1^*)\sigma_g+2\|\nabla g(t^*)\|^2)}
\end{align}
where $\sigma_g$ is the strong convexity parameter of $g$ and $\theta$ is the Hoffman constant for the polyhedral optimal set $\cX^*$. 
 The dependence on $F_1^*$ and $\cX^*$ in the bound~\eqref{a:estimationofmu} can be 
further replaced by known constants if the constraint set $\{Cx\leq d\}$ is compact, see for example~\cite{Beck2017}.
Hence the estimation of $\mu_{F_1}(x_0)$ mainly relies on the computation of 
the Hoffman constant $\theta$ for the linear inequality system~\eqref{a:lis}. Assuming that $A$ has full row rank and 
$\{x:Ax=0,c^\top x=0, Cx<0\}\neq \emptyset$, then, we have~\cite{Klatte1995}:
\begin{align}\label{a:thetaformu}
\theta=\min\{\|A^\top u+c^\top w+C^\top v\|: \|u\|^2+\|v\|^2+\|w\|^2=1, v\geq 0\}.
\end{align}

Many problems fall into the category of~\eqref{a:compositionproblem}, including
 the $L^1$ regularized least square problem~\eqref{a:LASSO} and the logistic regression problem~\eqref{alr} that we shall consider later for numerical illustration.  Indeed the $L^1$ regularization term can be written equivalently 
as a system of linear inequalities $\{Cx\leq d\}$, as shown in~\cite{Bolte2017}. However, the size of the matrix $C$ 
shall be of the same order as the number of variables $n$\footnote{In~\cite{Bolte2017}, the $L^1$ norm is rewritten by
a matrix of $2^n$ rows. }. Therefore, the computation of $\theta$ and estimation of $\mu(x_0)$ is far from trivial when the problem dimension
is high and can sometimes be as hard as solving the original optimization problem. Note that  under Assumption~\ref{ass2},  a broad class of first order methods, including the proximal gradient method and its coordinatewise extensions~\cite{Luo1993,JMLR:v15:wang14a}, converge linearly without  requiring a priori knowledge on the error bound constant $\mu(x_0)$.

\subsection{APPROX and its properties}
%APPROX (Accelerated Parallel and PROXimal coordinate descent) \cite{FR:2013approx}. 
In the following,  $\nabla f(y_k)$ denotes the gradient of $f$ at point $y_k$ and  $\nabla_i f(y_k)$ denotes the partial derivative of $f$ at point $y_k$ with respect to the $i$th coordinate.   $\hat S$ is  a random subset of $[n]:= \{1,2,\dots,n\}$ with the property that $\Prob(i \in \hat{S})=\Prob(j\in \hat{S})$ for all $i,j \in [n]$ and $\tau=\Exp[|\hat S|]$.

We recall the definition of APPROX(Accelerated Parallel PROXimal coordinate descent method) in Algorithm~\ref{alg:approx}. The algorithm reduces to the accelerated proximal gradient (APG) method~\cite{tseng2008accelerated} when $\hat S=[n]$ with probability one.
It employs a positive parameter vector $v\in \R^n$. To guarantee the convergence of the algorithm, the positive vector $v$ should satisfy the so-called expected separable overapproximation (ESO) assumption, developed in~\cite{FR:spcdm,RT:PCDM} for the study of parallel coordinate descent methods.
\begin{ass}[ESO] \label{ass:ESO} 
We write $(f,\hat{S})\sim \mathrm{ESO}(v)$ if
\begin{equation}\label{eq:ESO}\Exp\left[f(x+h_{[\hat{S}]})\right] \leq f(x) + \frac{\tau}{n}\left(\ve{\nabla f(x)}{h} + \frac{1}{2}\|h\|_{v}^2\right), \qquad x,h \in \R^n.
\end{equation}
where for $h=(h^1,\dots, h^n)\in \R^n$ and $S\subset [n]$, $h_{[S]}$ is defined as: \[ h_{[S]} \eqdef \sum_{i\in S} h^i e_i ,\] with $e_i$ being the $i$th standard basis vectors in $\R^n$.
\end{ass}
We require that the positive vector $v$ used in APPROX satisfy~\eqref{eq:ESO} with respect to the sampling $\hat S$ used. 
When in each step we update only one coordinate, we have $\tau=1$ and~\eqref{eq:ESO} reduces to:
\begin{equation}\label{eq:ESO-2}
\frac{1}{n}\sum_{i=1}^n  f(x+h^ie_i) \leq f(x) + \frac{1}{n}\left(\ve{\nabla f(x)}{h} + \frac{1}{2}\|h\|_{v}^2\right), \qquad x,h \in \R^n.
\end{equation}
It is easy to see that in this case the vector $v$ corresponds to the coordinate-wise Lipschitz constants of $\nabla f$, see e.g.~\cite{Nesterov:2010RCDM}.  Explicit formulas for computing admissible $v$ with respect to more general sampling $\hat S$ can be found in~\cite{RT:PCDM,FR:spcdm,QR:acdn}.
\begin{algorithm}
\begin{algorithmic}[h!]
\STATE{Set $\theta_0 = \frac{\tau}{n}$ and $z_0 = x_0$}.
\FOR{$k \in \{0, \ldots, K-1\}$} 
\STATE $y_k=(1-\theta_k)x_k+\theta_k z_k$ \\
\STATE Randomly generate $S_k \sim \hat{S}$\\
\FOR{ $i \in S_k$}
\STATE
$
z_{k+1}^i
=\arg\min_{z\in \R} \big\{\< \nabla_i f(y_k), z-y_k^i>+\frac{\theta_kn v_i}{2 \tau} |z-z_k^i|^2+\psi^i(z) \big\}
$ \\
\ENDFOR
\STATE $x_{k+1}=y_k+\frac{n}{\tau}\theta_k(z_{k+1}-z_k)$\\
\STATE $\theta_{k+1}=\frac{\sqrt{\theta_k^4 + 4 \theta_k^2} - \theta_k^2}{2}$
\ENDFOR
\RETURN $x_{K}$
\end{algorithmic}
\caption{APPROX($f, \psi, x_0, K$)~\cite{FR:2013approx}}
\label{alg:approx}
\end{algorithm}

The rest of this section recalls some basic results about APPROX that we shall need.

\begin{lemma}\label{l:thetak}
The sequence $(\theta_k)$ defined by $\theta_0 \leq 1$ and $\theta_{k+1} = \frac{\sqrt{\theta_k^4 + 4 \theta_k^2} - \theta_k^2}{2}$ satisfies
\begin{align}&\label{athetabd}\frac{(2-\theta_0)}{k+(2-\theta_0)/\theta_0} \leq \theta_k \leq \frac{2}{k+2/\theta_0}
\\\label{arectheta}
&\frac{1-\theta_{k+1}}{\theta_{k+1}^2}=\frac{1}{\theta_k^2},\enspace \forall k=0,1,\dots
%\\& \theta_{k+1}\leq \theta_k, \enspace \forall k =0,1,\dots, \label{atheradecr}
\end{align}
\end{lemma}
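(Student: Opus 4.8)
The plan is to handle the two displayed claims in the order \eqref{arectheta} then \eqref{athetabd}, since the bounds follow from the recursion. First I would observe that the defining formula $\theta_{k+1} = \frac{\sqrt{\theta_k^4 + 4\theta_k^2} - \theta_k^2}{2}$ is exactly the positive root of the quadratic $\theta_{k+1}^2 + \theta_k^2\,\theta_{k+1} - \theta_k^2 = 0$. Rearranging this quadratic gives $\theta_k^2(1-\theta_{k+1}) = \theta_{k+1}^2$, which upon dividing by $\theta_{k+1}^2\theta_k^2$ is precisely \eqref{arectheta}. So \eqref{arectheta} is immediate: one only checks that the discriminant is nonnegative and that the chosen branch is the positive root; no induction is needed. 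At the outset I would also record that $0 < \theta_k \le 1$ for all $k$ (starting from $\theta_0 \le 1$), so that every square root below and the sign of $1-\theta_{k+1}$ are justified.

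From \eqref{arectheta} I would extract the two facts that drive the bounds. Dividing $\theta_k^2(1-\theta_{k+1}) = \theta_{k+1}^2$ by $\theta_k^2$ gives $\big(\theta_{k+1}/\theta_k\big)^2 = 1-\theta_{k+1} \le 1$, hence $\theta_{k+1}/\theta_k = \sqrt{1-\theta_{k+1}}$ and the sequence $(\theta_k)$ is nonincreasing. Writing $u_k := 1/\theta_k$, the recursion becomes $u_{k+1}^2 - u_k^2 = u_{k+1}$, which factors into the key increment identity
\[
u_{k+1} - u_k = \frac{u_{k+1}}{u_{k+1}+u_k} = \frac{\theta_k}{\theta_k+\theta_{k+1}}.
\]
The whole game is then to bound this increment from both sides and telescope.

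For the upper bound on $\theta_k$ (equivalently a lower bound on $u_k$), monotonicity $\theta_{k+1}\le\theta_k$ gives $\theta_k+\theta_{k+1}\le 2\theta_k$, hence $u_{k+1}-u_k \ge \tfrac12$; summing from $0$ to $k-1$ yields $u_k \ge \tfrac{1}{\theta_0} + \tfrac{k}{2}$, and inverting gives $\theta_k \le \frac{2}{k+2/\theta_0}$. For the sharper lower bound I need the matching upper estimate $u_{k+1}-u_k \le \frac{1}{2-\theta_0}$, which is equivalent to $\theta_{k+1} \ge (1-\theta_0)\theta_k$. Using $\theta_{k+1}/\theta_k = \sqrt{1-\theta_{k+1}}$, this in turn reduces (squaring, legitimate since $\theta_0\le 1$) to $\theta_{k+1} \le \theta_0(2-\theta_0)$, which holds because $\theta_{k+1} \le \theta_0 \le \theta_0(2-\theta_0)$, the last inequality using $2-\theta_0\ge 1$. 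Telescoping $u_{k+1}-u_k \le \frac{1}{2-\theta_0}$ gives $u_k \le \frac{1}{\theta_0} + \frac{k}{2-\theta_0}$, and inverting produces exactly $\theta_k \ge \frac{2-\theta_0}{k+(2-\theta_0)/\theta_0}$.

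I expect the only genuinely delicate point to be obtaining the precise constant $2-\theta_0$ in the lower bound. The naive estimate $u_{k+1}-u_k < 1$ (from $u_k>0$) only yields the weaker $\theta_k \ge \frac{\theta_0}{1+k\theta_0}$; the stated bound forces the refined step $\theta_{k+1}\ge(1-\theta_0)\theta_k$, and the clean route to it is the relation $\theta_{k+1}/\theta_k=\sqrt{1-\theta_{k+1}}$ combined with monotonicity. Everything else — verifying $0<\theta_k\le1$, the factorization of the increment, and the two telescoping sums with their reciprocations — is routine.
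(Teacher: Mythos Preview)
Your proof is correct, but the route you take for the two-sided bound \eqref{athetabd} is genuinely different from the paper's. For \eqref{arectheta} the two proofs are essentially identical: both observe that $\theta_{k+1}$ is the positive root of $X^2+\theta_k^2 X-\theta_k^2$ and rearrange. For \eqref{athetabd}, however, the paper argues each inequality separately and by different techniques: the upper bound is obtained by induction, checking that the polynomial $P(X)=X^2+\theta_k^2X-\theta_k^2$ is nonnegative at $X=\frac{2}{k+1+2/\theta_0}$ and using that $P$ is increasing on $[0,\infty)$; the lower bound is obtained by contradiction, exploiting that $x\mapsto (1-x)/x^2$ is strictly decreasing on $(0,2)$ to derive an inequality that visibly fails. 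Your argument instead passes to $u_k=1/\theta_k$, rewrites \eqref{arectheta} as $u_{k+1}^2-u_k^2=u_{k+1}$, extracts the increment identity $u_{k+1}-u_k=\theta_k/(\theta_k+\theta_{k+1})$, and then bounds this increment on both sides (by $\tfrac12$ from below via monotonicity, and by $\tfrac{1}{2-\theta_0}$ from above via the neat reduction to $\theta_{k+1}\le\theta_0(2-\theta_0)$) before telescoping. What you gain is a single, symmetric mechanism that handles both inequalities at once and makes the constants $2$ and $2-\theta_0$ emerge transparently from the increment bounds; what the paper's approach gains is that it never leaves the original variable $\theta_k$ and requires no auxiliary identity, at the cost of two structurally unrelated arguments.
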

Part of the results are proved in~\cite{FR:2013approx}. We give the complete proof in the appendix.

\begin{proposition}[\cite{FR:2013approx}]
\label{prop:approx_basic}
The iterates of APPROX (Algorithm~\ref{alg:approx}) satisfy for all $k \geq 1$   and any $x_*\in\cX^*$,
\begin{equation*}
\frac{1}{\theta_{k-1}^2}\Exp[F(x_{k}) - F^*] + \frac{1}{2\theta_0^2}\Exp[\norm{z_{k} - x_*}_v^2] \leq \frac{1 - \theta_0}{\theta_{0}^2}(F(x_{0}) - F^*) + \frac{1}{2\theta_0^2}\norm{x_{0} - x_*}_v^2 
\end{equation*}

\end{proposition}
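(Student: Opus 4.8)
The plan is to establish the convergence result by identifying a Lyapunov-type quantity that does not increase in expectation from one iteration to the next. Define, for $k \geq 0$,
\[
E_k \eqdef \frac{1-\theta_k}{\theta_k^2}\bigl(F(x_k) - F^*\bigr) + \frac{1}{2\theta_0^2}\norm{z_k - x_*}_v^2.
\]
The goal would be to show that $\Exp[E_{k+1} \mid \mathcal{F}_k] \leq E_k$, where $\mathcal{F}_k$ is the $\sigma$-algebra generated by the first $k$ iterations, and then to telescope. Note that by the recursion~\eqref{arectheta}, $\frac{1-\theta_{k+1}}{\theta_{k+1}^2} = \frac{1}{\theta_k^2}$, so the coefficient of the function-value term at step $k+1$ equals $1/\theta_k^2$; this is the clean algebraic identity that makes the telescoping work and converts the $\frac{1-\theta_k}{\theta_k^2}$ weight into the $\frac{1}{\theta_{k-1}^2}$ weight appearing in the final bound.

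\emph{First I would set up the one-step descent inequality.} Fix an iteration $k$ and condition on $y_k$, $x_k$, $z_k$. The update of $z_{k+1}$ is a proximal step: each coordinate $z_{k+1}^i$ minimizes a strongly convex surrogate built from the partial gradient $\nabla_i f(y_k)$, a quadratic proximal term with weight $\frac{\theta_k n v_i}{2\tau}$, and $\psi^i$. I would use the ESO assumption~\eqref{eq:ESO} to overestimate $\Exp[f(x_{k+1})]$ in terms of $f(y_k)$, the inner product with the gradient, and the weighted norm $\norm{\cdot}_v^2$ of the step $x_{k+1} - y_k = \frac{n}{\tau}\theta_k(z_{k+1} - z_k)$. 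Combining this with convexity of $f$ (to replace $f(y_k) + \ve{\nabla f(y_k)}{x_* - y_k}$ by $f(x_*)$) and the optimality/strong-convexity characterization of the proximal minimizer for $z_{k+1}$ yields, after taking expectations, an inequality relating $\Exp[F(x_{k+1})]$, $F(x_k)$, $\norm{z_{k+1} - x_*}_v^2$ and $\norm{z_k - x_*}_v^2$. This is the standard estimate-sequence / APPROX one-step lemma, and since the statement attributes the result to~\cite{FR:2013approx}, I would invoke exactly that machinery.

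\emph{Then I would assemble and telescope.} The one-step inequality should take precisely the form $\Exp[E_{k+1}\mid\mathcal{F}_k] \leq E_k$, using the recursion~\eqref{arectheta} to match coefficients; the factor $\frac{n}{\tau}\theta_k$ in the definition of $x_{k+1}$ together with the prox weight $\frac{\theta_k n v_i}{2\tau}$ is what makes the quadratic terms telescope with the uniform coefficient $\frac{1}{2\theta_0^2}$. Taking total expectations and summing from $k=0$ to $k=K-1$ collapses the telescope, leaving
\[
\frac{1}{\theta_{K-1}^2}\Exp[F(x_K) - F^*] + \frac{1}{2\theta_0^2}\Exp[\norm{z_K - x_*}_v^2] \leq \frac{1-\theta_0}{\theta_0^2}(F(x_0) - F^*) + \frac{1}{2\theta_0^2}\norm{x_0 - x_*}_v^2,
\]
where on the left I have used $\frac{1-\theta_K}{\theta_K^2} = \frac{1}{\theta_{K-1}^2}$ and the nonnegativity of the dropped $(1-\theta_K)$ weight, and on the right the initial term is $E_0$ with $z_0 = x_0$.

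\emph{The hard part} is the careful handling of the proximal step in the randomized coordinate setting: one must correctly pass from the per-coordinate optimality conditions to a global inequality involving the full vector $z_{k+1}$, accounting for the fact that only the coordinates in $S_k$ are updated while the rest are frozen, and ensuring the expectation over $\hat{S}$ produces exactly the $\frac{\tau}{n}$ and $\frac{n}{\tau}$ factors that cancel. This bookkeeping—reconciling the sampling probabilities with the step scaling so that the ESO inequality and the prox inequality combine into a clean telescoping form—is the delicate step; the rest is algebraic manipulation driven by the two identities in Lemma~\ref{l:thetak}.
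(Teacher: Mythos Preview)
The paper does not give its own proof of this proposition: it is quoted verbatim from~\cite{FR:2013approx} and used as a black box. The only trace of the underlying argument in the present paper is the one-step inequality~\eqref{a:Fk} in the proof of Theorem~\ref{th:strconv}, namely
\[
\Exp_k[\hat F_{k+1}-F^*] \leq (1-\theta_k)(\hat F_k-F^*) + \frac{\theta_k^2}{2 \theta_0^2}\big(\norm{z_k - x_*}_v^2 - \Exp_k[\norm{z_{k+1}-x_*}_v^2]\big),
\]
which is exactly the supermartingale-type inequality you are aiming for. Dividing by $\theta_k^2$ and invoking~\eqref{arectheta} gives precisely your $E_{k+1}\leq E_k$ in conditional expectation, so your plan is correct and matches the route taken in~\cite{FR:2013approx}.

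One technical point worth flagging: the quantity that actually telescopes in the APPROX analysis is not $F(x_k)$ but the surrogate $\hat F_k \eqdef f(x_k) + \sum_{l=0}^k \gamma_k^l \psi(z_l)$, which satisfies $\hat F_k \geq F(x_k)$ because $x_k$ is a convex combination of $z_0,\dots,z_k$ with weights $\gamma_k^l$. Your Lyapunov function $E_k$ should be written with $\hat F_k - F^*$ in place of $F(x_k)-F^*$; the supermartingale inequality holds for this modified $E_k$, and only at the final step do you drop to $F(x_K)\leq \hat F_K$ to obtain the stated bound. With $F(x_k)$ directly, the one-step inequality need not hold, because the proximal/ESO machinery controls $\hat F_{k+1}$, not $F(x_{k+1})$, in terms of $\hat F_k$. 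Apart from this bookkeeping detail, your outline is the standard and correct argument.
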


\iffalse
We obtain directly from Proposition~\ref{prop:approx_basic} the following result.
\begin{corollary}
\label{cor:approx_basic}
The iterates of APPROX satisfy for all $k \geq 1$,
\begin{multline*}
\frac{1}{\theta_{k-1}^2}\Exp[F(x_{k}) - F^*]  + \frac{1}{2\theta_0^2} \Exp[\dist_v(x_k, \cX^*)^2] \\ \leq  \frac{1 - \theta_0}{\theta_{0}^2}(F(x_{0}) - F^*)  + \frac{1}{2\theta_0^2}\dist_v(x_0, \cX^*)^2  
\end{multline*}
\end{corollary}
\fi

%\begin{remark} \normalfont
%The strength of these propositions is that they are independent of the strong convexity parameter. Indeed,  APG, FISTA and APPROX work for non-strongly convex minimization.
%\end{remark}

%\begin{remark} \normalfont
%As APPROX generalizes APG, we have covered all three algorithms in the two propositions. A remarkable feature is that
%the result for FISTA and APG are exactly the same even though 
%the algorithms are different. 
%As a consequence, from now on, we will write the proofs only for APPROX.
%\end{remark}

\subsection{Restarted APPROX}

Under Assumption~\ref{ass2}, restarted APG~\cite{tseng2008accelerated} or FISTA~\cite{beck2009fista} enjoys linear convergence~\cite{FercoqQuDeter17} and can have improved complexity bound than proximal gradient method with appropriate restart periods~\cite{o2012adaptive}. 
Our goal is to design
restarted APPROX with similar properties based on the results in the previous section.
Having defined a set of integers $\{K_0,K_1,\dots\}$, at which frequencies one wishes to restart
the method, we can write the restarted APPROX 
as in Algorithm~\ref{algo:restartedFGMv1}.

\begin{algorithm}
\begin{algorithmic}[h!]
\STATE Choose $x_0 \in \dom \psi$ and set $\bar x_0 = x_0$.
\STATE Choose restart periods $\{K_0,\dots,K_r,\dots\}\subseteq \mathbb N$.
\FOR{ $r \geq 0$}
%\STATE $K$ = RestartTimes($r+1$) - RestartTimes($r$) \label{line:testrestarting2} 
\STATE $\bar x_{r+1}$ = APPROX($f, \psi, \bar x_r, K_r$)
\ENDFOR 
\end{algorithmic}
\caption{APPROX with restart}
\label{algo:restartedFGMv1}
\end{algorithm}

In order to take profit of the local error bound, 
the proofs for restarted FISTA and APG used the non-blowout property guaranteeing that
the function value and distance to the optimal points
never go above their initial value~\cite{FercoqQuDeter17}.
For APPROX 
the complexity result holds in expectation only.
This has two consequences:
\begin{itemize}
\item Even if the initial point is in a set where the
local error bound holds, we are not guaranteed that the next iterates
will remain to the set. To overcome this issue, at the time
of restart, we will check whether the function value has increased
and if this is unfortunately the case, we will instead restart at 
the initial point.
\item Designing an adaptive restart scheme is much more complex than 
in the deterministic case studied  in~\cite{FercoqQuDeter17,nesterov2013gradient}. In the deterministic case, one can compare the actual progress of the method with a theoretical bound 
and, if the actual progress violates the theoretical bound, one has a certificate
that the estimate of the local error bound was not correct. In the random case,
this does not hold any more: one can only know that either the local error bound was not correct or we have fallen into a small probability event. 
Instead of looking for certificates, we introduce in Section~\ref{sec:variable_restart} a sequence of variable restart periods that allows
us to try several restart periods at an expense that we control.

\end{itemize} 

To partially overcome the above mentioned difficulties, 
it will be convenient to force a decrease in function value
when a restart takes place. This leads to Algorithm~\ref{algo:restartedFGMv2}. Note that we only check function values at the time of each restart: thus this means negligible overhead.

\begin{algorithm}
\begin{algorithmic}[h!]
\STATE Choose $x_0 \in \dom \psi$ and set $\tilde x_0=x_0$.
\STATE Choose restart periods $\{K_0,\dots,K_r,\dots\}\subseteq \mathbb N$.
\FOR{ $r \geq 0$}
%\STATE $K$ = RestartTimes($r+1$) - RestartTimes($r$)
\STATE $\bar x_{r+1}$ = APPROX($f, \psi, \tilde x_r, K_r$)
\STATE
$\tilde x_{r+1} \leftarrow \bar x_{r+1} 1_{F(\bar x_{r+1})\leq F(\tilde x_r)}+\tilde x_r 1_{F(\bar x_{r+1})> F(\tilde x_r)}$
\ENDFOR 
\end{algorithmic}
\caption{APPROX with restart and guaranteed function value decrease}
\label{algo:restartedFGMv2}
\end{algorithm}

\section{Linear convergence with constant restart periods}
\label{sec:restart}
In this section we consider constant restart periods, i.e. $K_i=K$ for all $i\in \bN$. We present two types of convergence result for restated APPROX. The first one  asserts that  linear convergence (in expectation) can be obtained for restarted APPROX with long enough restart period $K$, similar to  the classical results about restarted APG or FISTA~\cite{nesterov2013gradient,o2012adaptive}. The second one claims that linear convergence (in expectation) is guaranteed for arbitrary restart period, if strong convexity condition holds.
The basic tool upon which we build our analysis is
a contraction property.
%We first present two cases that require a special condition in order to guarantee the linear convergence. 

\subsection{Contraction for long enough periods}

The first result is an extension of the ``optimal fixed restart''
of \cite{nesterov2013gradient,o2012adaptive} to APPROX.

\begin{proposition}[Conditional restarting at $x_k$]
Let $(x_k, z_k)$ be the iterates of APPROX applied to~\eqref{eq-prob}. 
Denote:
$$
\tilde x = x_k 1_{F(x_k)\leq F(x_0)}+x_0 1_{F(x_k)> F(x_0)}.
$$
 We have
\begin{align}\label{a:Ftildexk}
\Exp[F(\tilde x) - F^*]\leq
\theta_{k-1}^2 \left(\frac{1 - \theta_0}{\theta_{0}^2} + \frac{1}{\theta_0^2\mu(v,x_0)} \right)(F(x_{0}) - F^*) .
\end{align}
Moreover, given $\alpha < 1$, if 
\begin{align}\label{a:conditionalresxk}
k \geq \frac{2}{\theta_0}\left(\sqrt{\frac{1+\mu(v,x_0)}{\alpha\mu(v,x_0)}}-1\right) + 1 ,
\end{align}
then $\Exp[F(\tilde x) - F^*] \leq \alpha (F(x_{0}) - F^*)$.
\label{prop:restart_x}
\end{proposition}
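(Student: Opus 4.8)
The plan is to chain together three ingredients: the basic APPROX convergence bound (Proposition~\ref{prop:approx_basic}), the local error bound (Assumption~\ref{ass2}), and the definition of the truncated point $\tilde x$. First I would specialize Proposition~\ref{prop:approx_basic} by choosing $x_*$ to be the orthogonal projection of $x_0$ onto $\cX^*$ with respect to $\|\cdot\|_v$, so that $\|x_0 - x_*\|_v^2 = \dist_v(x_0,\cX^*)^2$. Dropping the nonnegative term $\frac{1}{2\theta_0^2}\Exp[\|z_k - x_*\|_v^2]$ on the left gives
\begin{align*}
\frac{1}{\theta_{k-1}^2}\Exp[F(x_k) - F^*] \leq \frac{1-\theta_0}{\theta_0^2}(F(x_0) - F^*) + \frac{1}{2\theta_0^2}\dist_v(x_0,\cX^*)^2.
\end{align*}
The next move is to bound the distance term using Assumption~\ref{ass2}. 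Since $x_0 \in \dom(F) = [F \leq F(x_0)]$, the error bound applied at $x = x_0$ yields $\frac{\mu(v,x_0)}{2}\dist_v(x_0,\cX^*)^2 \leq F(x_0) - F^*$, hence $\frac{1}{2}\dist_v(x_0,\cX^*)^2 \leq \frac{1}{\mu(v,x_0)}(F(x_0)-F^*)$. Substituting and multiplying through by $\theta_{k-1}^2$ gives
\begin{align*}
\Exp[F(x_k) - F^*] \leq \theta_{k-1}^2\left(\frac{1-\theta_0}{\theta_0^2} + \frac{1}{\theta_0^2\mu(v,x_0)}\right)(F(x_0)-F^*).
\end{align*}

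To pass from $x_k$ to $\tilde x$, I note that by construction $F(\tilde x) \leq F(x_0)$ always, and more importantly $F(\tilde x) - F^* \leq F(x_k) - F^*$ pointwise: on the event $\{F(x_k) \leq F(x_0)\}$ we have $\tilde x = x_k$, while on the complementary event $\tilde x = x_0$ and $F(x_0) - F^* \leq F(x_k) - F^*$ holds precisely because we are on $\{F(x_k) > F(x_0)\}$. Taking expectations preserves the inequality, so $\Exp[F(\tilde x) - F^*] \leq \Exp[F(x_k) - F^*]$, which combined with the previous display establishes~\eqref{a:Ftildexk}.

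For the second claim I would plug the explicit upper bound $\theta_{k-1} \leq \frac{2}{(k-1) + 2/\theta_0}$ from~\eqref{athetabd} of Lemma~\ref{l:thetak} into~\eqref{a:Ftildexk} and force the resulting coefficient below $\alpha$. The coefficient is $\theta_{k-1}^2 \cdot \frac{1}{\theta_0^2}\big((1-\theta_0) + \frac{1}{\mu(v,x_0)}\big) \leq \theta_{k-1}^2 \cdot \frac{1}{\theta_0^2}\cdot\frac{1+\mu(v,x_0)}{\mu(v,x_0)}$, using $1-\theta_0 \leq 1$. Requiring $\theta_{k-1}^2 \leq \frac{\alpha\theta_0^2\,\mu(v,x_0)}{1+\mu(v,x_0)}$ and inverting the bound on $\theta_{k-1}$ — that is, demanding $(k-1) + 2/\theta_0 \geq \frac{2}{\theta_0}\sqrt{\frac{1+\mu(v,x_0)}{\alpha\mu(v,x_0)}}$ — rearranges exactly to condition~\eqref{a:conditionalresxk}. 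I expect the only delicate point to be the bookkeeping that aligns the index shift in $\theta_{k-1}$ with the $k$ appearing in~\eqref{a:conditionalresxk} and confirming the algebraic rearrangement lands on the stated threshold; the probabilistic content is light, since the truncation comparison is a deterministic pointwise inequality that survives taking expectations.
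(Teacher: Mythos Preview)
Your proposal is correct and follows essentially the same route as the paper: apply Proposition~\ref{prop:approx_basic} with $x_*$ the projection of $x_0$, drop the $z_k$ term, replace $\dist_v(x_0,\cX^*)^2$ via Assumption~\ref{ass2}, use the pointwise inequality $F(\tilde x)\leq F(x_k)$, and then invoke the upper bound on $\theta_{k-1}$ from Lemma~\ref{l:thetak} together with $1-\theta_0\leq 1$ to obtain the threshold~\eqref{a:conditionalresxk}. Your write-up is in fact slightly more explicit than the paper's on the pointwise comparison and the index bookkeeping, but the argument is the same.
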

\begin{proof}By Proposition~\ref{prop:approx_basic}, the following holds for the iterates of APPROX:
\begin{align*}
\Exp[F(x_k) - F^*]& {\leq}
\theta_{k-1}^2 \left(\frac{1 - \theta_0}{\theta_{0}^2}(F(x_{0}) - F^*) + \frac{1}{2\theta_0^2} \dist_v(x_{0}, \mathcal X^*)^2 \right) \\
& \overset{\eqref{a:strconv2}}{\leq} 
\theta_{k-1}^2 \left(\frac{1 - \theta_0}{\theta_{0}^2} + \frac{1}{\mu(v,x_0)\theta_0^2} \right)(F(x_{0}) - F^*). 
\end{align*}
As with probability one,
$$
F(\tilde x) - F^*\leq F(x_k)-F^*,
$$
we obtain~\eqref{a:Ftildexk}.
Condition~\eqref{a:conditionalresxk} is equivalent to:
$$
\frac{4}{(k-1+2/\theta_0)^2}\left(\frac{1 }{\theta_{0}^2} + \frac{1}{\mu(v,x_0)\theta_0^2} \right) \leq \alpha,
$$
and we have the contraction using~\eqref{athetabd}.
\end{proof}
\begin{remark} \normalfont
Notice that the condition~\eqref{a:conditionalresxk} requires to know a lower bound on the error bound constant $\mu(v,x_0)$.
\end{remark}

\begin{corollary}
\label{cor:fixed_restart}
Denote $K(\alpha) = \left\lceil \frac{2}{\theta_0}\left(\sqrt{\frac{1+\mu(v,x_0)}{\alpha\mu(v,x_0)}}-1\right) + 1 \right\rceil$. 
If the restart periods $\{K_0, \cdots, K_r,\dots\}$ are all equal to $K(\alpha)$,
then the iterates of Algorithm~\ref{algo:restartedFGMv2} satisfy
\[
\Exp[F(\tilde x_r) - F^*] \leq \alpha^r (F(x_0) - F^*).
\]
\end{corollary}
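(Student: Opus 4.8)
The plan is to apply Proposition~\ref{prop:restart_x} once per epoch and to chain the resulting one-step contractions using the tower property of conditional expectation. The key structural observation is that the conditional update in Algorithm~\ref{algo:restartedFGMv2} forces the function values $F(\tilde x_r)$ to be non-increasing in $r$: indeed $\tilde x_{r+1}$ equals $\bar x_{r+1}$ only when $F(\bar x_{r+1})\leq F(\tilde x_r)$, and is kept at $\tilde x_r$ otherwise, so with probability one $F(\tilde x_{r+1})\leq F(\tilde x_r)$. Hence $F(\tilde x_r)\leq F(x_0)$ for every $r$, and each restart point lies in the sublevel set $[F\leq F(x_0)]$ on which Assumption~\ref{ass2} holds.

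First I would establish a monotonicity of the error bound constant along the restart points. Because $F(\tilde x_r)\leq F(x_0)$ we have the inclusion $[F\leq F(\tilde x_r)]\subseteq [F\leq F(x_0)]$, and any $\mu$ validating~\eqref{a:strconv2} on the larger set $[F\leq F(x_0)]$ a fortiori validates it on the smaller set $[F\leq F(\tilde x_r)]$. Taking $\mu=\mu(v,x_0)$ and recalling that $\mu(v,\tilde x_r)$ is the largest admissible constant on the smaller set yields $\mu(v,\tilde x_r)\geq \mu(v,x_0)$. Next I would note that the right-hand side of the restart condition~\eqref{a:conditionalresxk}, viewed as a function of $\mu$, is decreasing, since $(1+\mu)/\mu = 1+1/\mu$ decreases in $\mu$. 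Combining these two facts with the definition of $K(\alpha)$ as the ceiling of that right-hand side evaluated at $\mu(v,x_0)$, one obtains
\begin{align*}
K(\alpha) \geq \frac{2}{\theta_0}\left(\sqrt{\frac{1+\mu(v,x_0)}{\alpha\mu(v,x_0)}}-1\right)+1 \geq \frac{2}{\theta_0}\left(\sqrt{\frac{1+\mu(v,\tilde x_r)}{\alpha\mu(v,\tilde x_r)}}-1\right)+1,
\end{align*}
so that the fixed period $K(\alpha)$ satisfies condition~\eqref{a:conditionalresxk} at every epoch, regardless of which (random) point $\tilde x_r$ the epoch starts from.

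I would then run the induction on $r$. Conditioning on the history $\mathcal{F}_r$ that determines $\tilde x_r$, the fresh sampling used by APPROX during epoch $r$ is independent of $\mathcal{F}_r$, so Proposition~\ref{prop:restart_x} applies conditionally with starting point $\tilde x_r$ and $k=K_r=K(\alpha)$; since the period condition is met, it gives $\Exp[F(\tilde x_{r+1})-F^*\mid \mathcal{F}_r]\leq \alpha\,(F(\tilde x_r)-F^*)$. Taking total expectation yields $\Exp[F(\tilde x_{r+1})-F^*]\leq \alpha\,\Exp[F(\tilde x_r)-F^*]$, and iterating from $\tilde x_0=x_0$ gives the claimed bound.

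The subtle point, rather than any heavy computation, is justifying that the single period $K(\alpha)$ — computed once from $\mu(v,x_0)$ — stays valid at every epoch, even though the relevant local error bound constant at the random start $\tilde x_r$ is $\mu(v,\tilde x_r)$. This is exactly resolved by the monotonicity $\mu(v,\tilde x_r)\geq\mu(v,x_0)$, which in turn relies on the guaranteed non-increase of function values built into Algorithm~\ref{algo:restartedFGMv2}. A secondary point to handle cleanly is the conditioning: since Proposition~\ref{prop:restart_x} is stated for a deterministic initial point, I would invoke it conditionally on $\mathcal{F}_r$ and then untangle the conditional expectations by the tower property.
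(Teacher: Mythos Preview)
Your proof is correct and follows essentially the same approach as the paper: observe that the forced decrease in Algorithm~\ref{algo:restartedFGMv2} yields $F(\tilde x_r)\leq F(x_0)$, deduce $\mu(v,\tilde x_r)\geq\mu(v,x_0)$, and apply Proposition~\ref{prop:restart_x} recursively. Your version is in fact more detailed than the paper's, which asserts the monotonicity of $\mu(v,\cdot)$ and the recursive contraction in a single sentence without spelling out the sublevel-set inclusion argument or the conditional-expectation bookkeeping.
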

\begin{proof}
By the definition of $\tilde x_r$, we know that for all $r$, 
$F(\tilde x_r) \leq F(x_0)$. Hence, for all $r$, $\mu(v, \tilde x_r) \geq \mu(v, x_0)$.
We can thus apply Proposition~\ref{prop:restart_x} recursively and obtain the linear convergence.
\end{proof}
The iterate $\tilde x_{r}$ is obtained after running $r$ times APPROX (Algorithm~\ref{alg:approx}) of  $K(\alpha)$ iterations.
Said otherwise we have a linear rate equal to
\begin{align}\label{a:rate}
\alpha^{1/K(\alpha)} \approx \alpha^{\sqrt{\alpha} \frac{\theta_0}{2} \sqrt{{\mu(v,x_0)}/(1+\mu(v,x_0))}},
\end{align}
which suggests choosing $\alpha = \exp(-2)$, or equivalently a fixed restart period
\begin{align}\label{a:Kstar}
K^*= \left\lceil \frac{2e}{\theta_0}\left(\sqrt{\frac{1+\mu(v,x_0)}{\mu(v,x_0)}}-1\right) + 1 \right\rceil.
\end{align}
Then, to obtain $\Exp[F(\tilde x_r)-F^*]\leq \epsilon$, the total number of iterations is bounded by
\begin{align}\label{a:sqrtmu}
N^*:=\ln\left(\frac{F(x_0)-F^*}{\epsilon}\right) K^*.
\end{align}

\begin{remark}
We compare the two extreme cases of the complexity bound~\eqref{a:sqrtmu} when $\tau=1$ and $\tau=n$. 
In view of~\eqref{a:muv} and~\eqref{eq:ESO-2}, we have,
$$
O\left(\sqrt{\frac{L}{\mu(x_0)}}\ln(1/\epsilon)\right) \mathrm{~for~} \tau=n \mathrm{~V.S.~} 
O\left(n\sqrt{\frac{\max_i L_i}{\mu(x_0)}}\ln(1/\epsilon)\right) \mathrm{~for~} \tau=1,
$$
where $L$ is the Lipschitz constant of $\nabla f$ and $L_i$ is the Lipschitz constant of $\nabla_i f$ with respect to the $i$th coordinate.
Note that $\tau$ is the number of coordinates to update at each iteration.
Hence for serial computation, choosing $\tau=1$ can be more advantegeous 
than restarted APG or FISTA since
we always have $\max_i L_i\leq L$.
\end{remark}

\begin{remark}\label{remun}
It is unknown whether Algorithm~\ref{algo:restartedFGMv1} admits the same convergence bound as Algorithm~\ref{algo:restartedFGMv2}. Indeed,
without forcing decrease of the objective value, we can not guarantee $\Exp[F(\bar x_{r+1}) - F^*] \leq \alpha \Exp[F(\bar x_{r}) - F^*]$ as $\bar x_r$ is not necessarily in the sublevel set $[F\leq F(x_0)]$. 
Clearly, when either a global error bound condition holds or $\hat S=[n]$ with probability one, there is no need to ensure the decrease of $F$. The latter two cases were respectively considered in our two previous papers~\cite{fercoqqu2016restarting} and~\cite{FercoqQuDeter17}.
\end{remark}

\subsection{Contraction for any period under strong convexity condition}

In this subsection, we assume that the function $F$ is $\mu$-strongly convex such that $\cX^*$ contains a unique element $x^*$ and
\begin{align}\label{a:geb}
F(x)\geq F^* +\frac{\mu}{2}\|x-x^*\|_v^2, \enspace \forall x\in [F<+\infty].
\end{align}
In~\cite{fercoqqu2016restarting}, we showed that under the condition~\eqref{a:geb}, linear convergence is guaranteed for any restart period by restarting at a particular convex combination of all past iterates. Here we show that the same conclusion holds by simply restarting at the last iterate.

\begin{theorem}
\label{th:strconv}
Assume that $F$ is $\mu$-strongly convex. Denote 
\[
\Delta(x) = \frac{1-\theta_0}{\theta_0^2}(F(x) - F^*) + \frac{1}{2\theta_0^2}\dist_v(x, \cX^*)^2.
\]
Then the iterates of APPROX satisfy
\begin{align}\label{a:delta}
\Exp[\Delta(x_k)] \leq \frac{1 + (1-\theta_0)\mu}{1 + \frac{\theta_0^2}{2\theta_{k-1}^2}\mu} \Delta(x_0),\enspace \forall k\geq 1.
\end{align}
\end{theorem}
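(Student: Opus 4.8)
The plan is to leverage Proposition~\ref{prop:approx_basic} together with the strong convexity inequality~\eqref{a:geb}. Since $F$ is $\mu$-strongly convex, $\cX^*=\{x^*\}$ is a singleton and $\dist_v(x_k,\cX^*)=\norm{x_k-x^*}_v$. I would set $g=\Exp[F(x_k)-F^*]$ and $r=\tfrac12\Exp[\dist_v(x_k,\cX^*)^2]$, so that $\Exp[\Delta(x_k)]=\tfrac1{\theta_0^2}\big((1-\theta_0)g+r\big)$. Two inequalities then drive everything: the strong convexity bound $\mu r\le g$, obtained by applying~\eqref{a:geb} at $x_k$ and taking expectations, and what I will call the \emph{iterate estimate}
\[
\frac1{\theta_{k-1}^2}\,g+\frac1{\theta_0^2}\,r\le\Delta(x_0),
\]
which is exactly the conclusion of Proposition~\ref{prop:approx_basic} but with the true iterate $x_k$ in the distance term instead of the auxiliary point $z_k$.

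Granting these two facts, the bound $\rho_k\Delta(x_0)$ comes out of a short linear-programming argument. Multiplying the iterate estimate by $\theta_0^2$, I want to maximize the linear functional $(1-\theta_0)g+r$ over the triangle $\{g,r\ge0:\ \mu r\le g,\ \tfrac{\theta_0^2}{\theta_{k-1}^2}g+r\le\theta_0^2\Delta(x_0)\}$. The maximum is attained at one of the two non-trivial vertices. At the vertex $\mu r=g$ on the active constraint the value is $\tfrac{1+(1-\theta_0)\mu}{1+\theta_0^2\mu/\theta_{k-1}^2}\,\theta_0^2\Delta(x_0)$, which is $\le\rho_k\theta_0^2\Delta(x_0)$ because $\theta_0^2\mu/\theta_{k-1}^2\ge\theta_0^2\mu/(2\theta_{k-1}^2)$ enlarges the denominator. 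At the vertex $(g,r)=(\theta_{k-1}^2\Delta(x_0),0)$ the value is $(1-\theta_0)\theta_{k-1}^2\Delta(x_0)$, and the required inequality $(1-\theta_0)\theta_{k-1}^2\le\rho_k\theta_0^2$ reduces, after clearing the denominator of $\rho_k$, to $\theta_0^2+(1-\theta_0)\tfrac{\mu\theta_0^2}{2}\ge(1-\theta_0)\theta_{k-1}^2$, which holds since $\theta_{k-1}\le\theta_0$ by Lemma~\ref{l:thetak}. Dividing by $\theta_0^2$ gives $\Exp[\Delta(x_k)]\le\rho_k\Delta(x_0)$.

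The crux, and the step I expect to be the main obstacle, is the iterate estimate: Proposition~\ref{prop:approx_basic} only controls the distance of the extrapolation point $z_k$, whereas $\Delta$ is built from the distance of $x_k$. It is tempting to bypass this by bounding $\tfrac1{\theta_0^2}r\le\tfrac1{\theta_0^2\mu}g$ through strong convexity and then invoking Proposition~\ref{prop:approx_basic}, but this only yields the weaker rate $\tfrac{\theta_{k-1}^2}{\theta_0^2}\cdot\tfrac{1+(1-\theta_0)\mu}{\mu}$, which exceeds $\rho_k$ precisely in the small-$k$ regime $\theta_{k-1}^2>\theta_0^2\mu/2$; so the passage from $z_k$ to $x_k$ really has to be made. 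I would do this through the convex-combination identity $x_k=(1-\theta_{k-1})x_{k-1}+\theta_{k-1}z_k$ (valid for APPROX when $\tau=n$, and in the conditional-expectation sense underlying the ESO estimate when $\tau<n$), which by convexity of $\norm{\cdot}_v^2$ gives $\Exp\norm{x_k-x^*}_v^2\le(1-\theta_{k-1})\Exp\norm{x_{k-1}-x^*}_v^2+\theta_{k-1}\Exp\norm{z_k-x^*}_v^2$. Substituting the $z_k$-distance bound of Proposition~\ref{prop:approx_basic} and arguing by induction on $k$ should propagate the bound $\Delta(x_0)$ to the $x_k$-potential $\tfrac1{\theta_{k-1}^2}\Exp[F(x_k)-F^*]+\tfrac1{2\theta_0^2}\Exp\norm{x_k-x^*}_v^2$; the delicate point is that this recursion couples the $x_k$- and $z_k$-distances across consecutive iterations, so the induction must be carried out jointly with the monotonicity of the Proposition~\ref{prop:approx_basic} potential and the identity $\tfrac{1-\theta_k}{\theta_k^2}=\tfrac1{\theta_{k-1}^2}$ from Lemma~\ref{l:thetak} so that the function-value coefficients telescope correctly.
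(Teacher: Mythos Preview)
Your linear-programming reduction is neat and would indeed deliver the claimed bound, \emph{provided} the ``iterate estimate''
\[
\frac{1}{\theta_{k-1}^2}\,\Exp[F(x_k)-F^*]+\frac{1}{2\theta_0^2}\,\Exp\norm{x_k-x^*}_v^2\ \le\ \Delta(x_0)
\]
were available. That is the whole difficulty, and the route you sketch to it does not go through when $\tau<n$.

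First, the identity $x_k=(1-\theta_{k-1})x_{k-1}+\theta_{k-1}z_k$ is false for APPROX unless $\theta_0=1$. From the update rules one has
\[
x_{k+1}=(1-\theta_k)x_k+\tfrac{\theta_k}{\theta_0}z_{k+1}-\tfrac{\theta_k}{\theta_0}(1-\theta_0)z_k,
\]
an affine combination with a \emph{negative} weight on $z_k$, so convexity of $\norm{\cdot}_v^2$ cannot be invoked. The ``conditional-expectation'' repair does not help either: $\Exp_k[x_{k+1}]$ is a convex combination of $x_k$ and the full-update point $\tilde z_{k+1}$, but Jensen gives $\norm{\Exp_k[x_{k+1}]-x^*}_v^2\le\Exp_k\norm{x_{k+1}-x^*}_v^2$, which points the wrong way. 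Second, even if the convex combination held, your induction does not close: combining the hypothesis at $k$ with Proposition~\ref{prop:approx_basic} at $k{+}1$ with weights $(1-\theta_k,\theta_k)$ leaves the $F(x_{k+1})$ coefficient equal to $1/\theta_k$ rather than the required $1/\theta_k^2$, and there is no monotonicity of $F$ along the iterates to fill the gap.

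The paper's proof faces exactly this obstruction head-on. It keeps the three-term affine representation of $x_{k+1}$ and expands $\norm{x_{k+1}-x^*}_v^2$ via the identity $\norm{(1-\lambda)a+\lambda b}^2=(1-\lambda)\norm{a}^2+\lambda\norm{b}^2-\lambda(1-\lambda)\norm{a-b}^2$, valid for \emph{any} real $\lambda$; this produces, besides $\norm{x_k-x^*}_v^2$, $\norm{z_{k+1}-x^*}_v^2$ and $\norm{z_k-x^*}_v^2$, an extra cross term $\norm{x_k-z_k}_v^2$. The Lyapunov function therefore carries four sequences $(a_k,b_k,c_k,d_k)$ (on $F$, $\norm{x_k-x^*}^2$, $\norm{z_k-x^*}^2$, $\norm{x_k-z_k}^2$), together with a transfer parameter $\sigma_k^K\in[0,1]$ that, at each step, trades a fraction of $\norm{x_{k+1}-x^*}_v^2$ for $F(x_{k+1})-F^*$ via strong convexity. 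The six resulting inequalities are solved explicitly for $(\sigma_k^K)$ and the sequences, and the rate $\rho_K$ so obtained is then bounded by the displayed closed form by a separate induction on $K$. In short, the paper never establishes your iterate estimate; it bypasses it by tracking $\norm{z_k-x^*}$ and $\norm{x_k-z_k}$ jointly, which is precisely what Remark~\ref{remde} flags as the new difficulty in the randomized case.
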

\begin{proof}
The proof is organised in 4 steps. Firstly, we derive a one-iteration inequality,
secondly, we identify conditions under which this inequality is a supermartingale inequality,
thirdly, we give a solution to the set of inequalities and finally, we bound the rate we obtain
by a simpler formula.

Because of its length, we postpone the proof to the appendix.
\end{proof}
\begin{remark}\label{remde}
The deterministic special case of~\eqref{a:delta} when $\hat S=[n]$ with probability one, was proved in~\cite{FercoqQuDeter17}, under the local error bound condition. The more general case with random $\hat S$ turns out to be more complexe as the one iteration inequality involves both $\|x_k-x^*\|_v$ and $\|z_k-x^*\|_v$ and uniqueness of $\cX^*$ is required.
\end{remark}

When strong convexity condition holds, we do not force decrease of the function value after a restart. Moreover, unlike Corollary~\ref{cor:fixed_restart}, we could show a linear rate of convergence for any restart period $K\geq n/\tau$. 
\begin{corollary}
Under condition~\eqref{a:geb},  if the restart periods $\{K_0, \cdots, K_r,\dots\}$ are all equal to $K\geq n/\tau$, then the iterates of Algorithm~\ref{algo:restartedFGMv1} satisfy
\[
\Exp(\Delta(\bar x_r))  \leq \left(\frac{1 + (1-\theta_0)\mu}{1 + \frac{\theta_0^2}{2\theta_{K-1}^2}\mu}\right)^{r} \Delta(x_0).
\]
\end{corollary}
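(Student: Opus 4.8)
The plan is to apply Theorem~\ref{th:strconv} recursively across the restart epochs of Algorithm~\ref{algo:restartedFGMv1}. The key observation is that each call to APPROX runs exactly $K$ inner iterations starting from the previous restart point $\bar x_r$, so Theorem~\ref{th:strconv} gives a contraction factor per epoch that I can simply chain together.

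First I would set up the recursion. For each $r\geq 0$, the iterate $\bar x_{r+1}$ is the output of APPROX$(f,\psi,\bar x_r,K)$, which is exactly $x_K$ produced by Algorithm~\ref{alg:approx} when initialized at $x_0=\bar x_r$. Since $F$ is assumed $\mu$-strongly convex, I can invoke Theorem~\ref{th:strconv} with $k=K$ and initial point $\bar x_r$. Crucially, because APPROX resets $\theta_0=\tau/n$ at the start of every restart (this is hardwired into Algorithm~\ref{alg:approx}), the contraction factor is the same constant
\[
\rho \eqdef \frac{1 + (1-\theta_0)\mu}{1 + \frac{\theta_0^2}{2\theta_{K-1}^2}\mu}
\]
at every epoch, where $\theta_0=\tau/n$ and $\theta_{K-1}$ is the value after $K-1$ internal updates of the $\theta$-recursion. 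Theorem~\ref{th:strconv} then yields $\Exp[\Delta(\bar x_{r+1})\mid \bar x_r]\leq \rho\,\Delta(\bar x_r)$.

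The next step is to handle the conditioning correctly and iterate. Since the randomness in epoch $r+1$ (the sampling of the sets $S_k$) is independent of the history producing $\bar x_r$, I would take total expectation and use the tower property to get $\Exp[\Delta(\bar x_{r+1})]\leq \rho\,\Exp[\Delta(\bar x_r)]$. Unrolling this from $r$ down to $0$, and noting $\bar x_0=x_0$ is deterministic, gives $\Exp[\Delta(\bar x_r)]\leq \rho^r\,\Delta(x_0)$, which is exactly the claimed bound. The role of the hypothesis $K\geq n/\tau$ is to ensure $\rho<1$: since $\theta_0=\tau/n$ and the sequence $(\theta_k)$ is decreasing with $\theta_{K-1}<\theta_0$ for $K\geq 1$, the denominator strictly exceeds the numerator, so the contraction is genuine; I would verify this monotonicity via Lemma~\ref{l:thetak}.

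The main subtlety, and the step I would be most careful about, is the conditioning argument rather than any analytic estimate. I must argue that applying Theorem~\ref{th:strconv} at epoch $r+1$ is legitimate even though $\bar x_r$ is itself random: the theorem is a statement about APPROX started from an arbitrary (possibly random but fixed) initial point, so conditioning on $\bar x_r$ and using independence of the fresh sampling is the clean way to justify the recursion. A secondary point worth noting is that, unlike Corollary~\ref{cor:fixed_restart}, no forced function-value decrease is needed here: the global strong convexity bound~\eqref{a:geb} holds on the entire domain $[F<+\infty]$, not merely on a sublevel set, so there is no danger of leaving the region where the error bound is valid, and Algorithm~\ref{algo:restartedFGMv1} (without the safeguard) suffices. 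This is precisely the gain over the local error bound setting discussed in Remark~\ref{remun}.
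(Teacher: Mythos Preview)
Your approach is correct and essentially identical to the paper's: apply Theorem~\ref{th:strconv} at each epoch, chain via the tower property, and observe that $K\geq n/\tau$ makes the factor a strict contraction. One correction: your stated reason for $\rho<1$ is not quite right---mere monotonicity of $(\theta_k)$ and $\theta_{K-1}<\theta_0$ (which fails for $K=1$ anyway) does not imply $\frac{\theta_0^2}{2\theta_{K-1}^2}>1-\theta_0$; you need the quantitative upper bound $\theta_{K-1}\leq \frac{2}{K-1+2/\theta_0}$ from~\eqref{athetabd} to get $2\theta_{K-1}^2<\theta_0^2/(1-\theta_0)$ when $K\geq 1/\theta_0$, exactly as the paper does.
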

\begin{proof}
This is a direct application of Theorem~\ref{th:strconv}. The bound~\eqref{a:delta} is a strict contraction when $2\theta^2_{k-1}< \theta^2_0/(1-\theta_0)$, which holds when $k\geq 1/\theta_0=n/\tau$ by~\eqref{athetabd}.
\end{proof}
We here obtain a linear rate:
\begin{align}\label{a:deltarate}
\left(\frac{1 + (1-\theta_0)\mu}{1 + \frac{\theta_0^2}{2\theta_{K-1}^2}\mu}\right)^{\frac 1K},
\end{align}
which is slightly worse than that suggested by~\eqref{a:rate} for large $K$
 but implies the same order of complexity bound as~\eqref{a:sqrtmu} 
if $K=\Theta(K^*)$.

In Figure~\ref{fig:rates_strong_conv}, we show in a numerical application that the convergence rate~\eqref{a:deltarate} of 
restarted APPROX is smaller 
than the convergence rate of coordinate descent for a wide range of
restart periods $K$.

\begin{figure}
\centering

\includegraphics[width=0.7\linewidth]{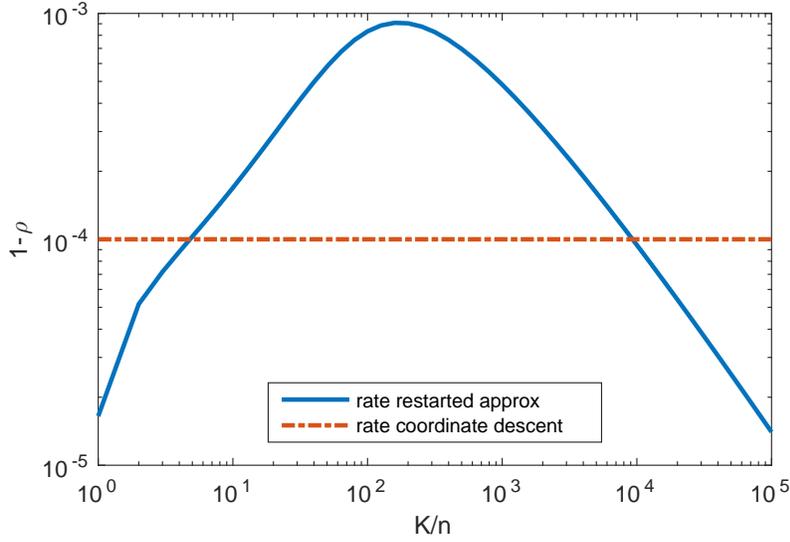}

\caption{Comparison of the worst case rate of convergence $\rho$ of restarted APPROX and coordinate descent for a $\mu$-strongly convex objective with $\mu = 10^{-3}$, $n=10$, $\tau = 1$ and various choices of the restart period~$K$. The larger $1-\rho$ is, the faster we expect the algorithm to be. We can see that if $K \in [5 n, 9.10^3 n]$, then 
restarted APPROX has a better convergence rate than coordinate descent.}
\label{fig:rates_strong_conv}
\end{figure}

\section{Variable restart}
\label{sec:variable_restart}

In this section, we are going to show that by choosing properly the
sequence of restart times, we can ensure a nearly linear rate, even if
we  know in advance nothing about the local quadratic error bound.
For this, we consider a sequence of restart periods $\{K_0, K_1,\cdots\}$ satisfying the following assumption:
\begin{ass}\label{ass:K}
~

\begin{enumerate}
\item
$K_0\in \bN\backslash \{0\}$;
\item
$K_{2^j-1} = 2^j K_0$ for all $j \in \mathbb{N}$;
\item
$\left |\{0\leq r < 2^J - 1 \;|\; K_r= 2^j K_0 \}\right | = 2^{J-1-j}$ for all $j \in \{0,1, \ldots, J-1\}$ and $ J\in \mathbb{N}$.
\end{enumerate}
\end{ass}
\medskip
For instance, we may take:
\begin{align*}
&K_0 = K_0 \quad K_1 = 2^1 K_0 \quad K_2 = K_0 \quad K_3 = 2^2 K_0 \\ &
K_4 = K_0 \quad K_5 = 2^1 K_0 \quad K_6 = K_0 \quad K_7 = 2^3 K_0 \\
&K_8 = K_0 \quad K_9 = 2^1 K_0 \quad K_{10} = K_0 \quad K_{11} = 2^2 K_0 
\\&K_{12} = K_0 \quad  K_{13} = 2^1 K_0 \quad K_{14} = K_0 \quad K_{15} = 2^4 K_0 
\end{align*}

\begin{theorem}
\label{thm:variable}
Consider Algorithm~\ref{algo:restartedFGMv2} with restart periods satisfying Assumption~\ref{ass:K}. 
Then 
\begin{align}\label{a:Fxepsilon}
\Exp[F(\tilde x_{2^J-1}) - F^*] \leq \epsilon,\end{align}
where $J=\ceil{{\max\left(\log_2\left({K^*}/{K_0}\right), 0\right)}} + \ceil{{\log_2\left({\ln\left({\delta_0}/{\epsilon}\right)/2}\right)}}$, $\delta_0=F(x_0)-F^*$ and $K^*$ is defined as in~\eqref{a:Kstar}. To obtain~\eqref{a:Fxepsilon}, the total number of APPROX iterations $K_0+\dots+K_{2^J-1}$ is bounded by
\begin{align}\label{a:boundK}
\big(\ceil{\max\left(\log_2\left({K^*}/{K_0}\right), 0\right)} + \ceil{\log_2\left({\ln\left({\delta_0}/{\epsilon}\right)}\right)}+1\big) {\ln\left({\delta_0}/{\epsilon}\right)}\max(K^*, K_0).
\end{align}
\end{theorem}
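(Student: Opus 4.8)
The plan is to view each outer iteration $r$ of Algorithm~\ref{algo:restartedFGMv2} as one run of conditional-restart APPROX and to sort it into a \emph{long} restart ($K_r\geq K^*$) or a \emph{short} one ($K_r<K^*$). The guaranteed function-value decrease is what makes the random case tractable: since $F(\tilde x_{r+1})\leq F(\tilde x_r)$ holds almost surely, we have $F(\tilde x_r)\leq F(x_0)$ for every $r$, so $[F\leq F(\tilde x_r)]\subseteq[F\leq F(x_0)]$ and hence $\mu(v,\tilde x_r)\geq\mu(v,x_0)$ (the monotonicity argument already used in the proof of Corollary~\ref{cor:fixed_restart}). Because $\mu\mapsto\sqrt{(1+\mu)/\mu}$ is decreasing, the period actually required at $\tilde x_r$ to obtain an $e^{-2}$ contraction is no larger than $K^*$; thus $K^*$, although computed from the \emph{unknown} $\mu(v,x_0)$, is a single valid threshold for all restarts.

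First I would record the two one-restart estimates. For a long restart, Proposition~\ref{prop:restart_x} applied conditionally on the filtration $\cF_r$ with $\alpha=e^{-2}$ (admissible exactly because $K_r\geq K^*$) gives $\Exp[F(\tilde x_{r+1})-F^*\mid\cF_r]\leq e^{-2}(F(\tilde x_r)-F^*)$, while a short restart yields only the trivial $F(\tilde x_{r+1})-F^*\leq F(\tilde x_r)-F^*$ from the forced decrease. Chaining these over $r=0,\dots,2^J-2$ by the tower property, only the long restarts contribute a factor, so
\begin{equation*}
\Exp[F(\tilde x_{2^J-1})-F^*]\leq e^{-2N_J}\,\delta_0,\qquad N_J:=\#\{0\leq r\leq 2^J-2:\ K_r\geq K^*\}.
\end{equation*}
The combinatorial core is to count $N_J$ from Assumption~\ref{ass:K}. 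Letting $j^*=\ceil{\max(\log_2(K^*/K_0),0)}$ be the least exponent with $2^{j^*}K_0\geq K^*$, item~3 says the value $2^jK_0$ occurs $2^{J-1-j}$ times among indices $0,\dots,2^J-2$, whence
\begin{equation*}
N_J=\sum_{j=j^*}^{J-1}2^{J-1-j}=2^{J-j^*}-1.
\end{equation*}
Since $J-j^*=\ceil{\log_2(\ln(\delta_0/\epsilon)/2)}$, the value of $J$ is calibrated so that $e^{-2N_J}\delta_0\leq\epsilon$, giving~\eqref{a:Fxepsilon}.

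For the iteration budget I would sum the periods directly. By item~3 each of the $J$ levels $j=0,\dots,J-1$ contributes total length $2^{J-1-j}\cdot 2^jK_0=2^{J-1}K_0$, so $\sum_{r=0}^{2^J-2}K_r=J\,2^{J-1}K_0$; adding the last period $K_{2^J-1}=2^JK_0$ gives $\sum_{r=0}^{2^J-1}K_r=(J+2)2^{J-1}K_0$. Substituting $J+2=j^*+\ceil{\log_2\ln(\delta_0/\epsilon)}+1$ and bounding $2^{J-1}K_0\leq\ln(\delta_0/\epsilon)\max(K^*,K_0)$ (writing $2^{J-1}K_0=(2^{j^*-1}K_0)\,2^{\ceil{\log_2(\ln(\delta_0/\epsilon)/2)}}$, then using $2^{\ceil{x}}\leq 2\cdot 2^x$ on the second factor and splitting the first according to whether $K^*\geq K_0$) reproduces exactly~\eqref{a:boundK}.

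The main obstacle, and the source of the subtlety, is the interplay between the unknown threshold and the in-expectation guarantee: one cannot detect at runtime whether a given restart was long enough, so the schedule must be designed so that, for \emph{every} admissible value of $\mu(v,x_0)$, the prescribed deterministic sequence of periods already contains a geometrically growing number $2^{J-j^*}-1$ of long restarts, after which per-restart expected contractions are aggregated purely through the tower property. Keeping the two bookkeeping quantities consistent with the single calibrated $J$—the exponential count $2^{J-j^*}-1$ governing convergence and the weighted sum $(J+2)2^{J-1}K_0$ governing the budget—is where care is needed, and the reliance on the forced decrease of Algorithm~\ref{algo:restartedFGMv2} (rather than Algorithm~\ref{algo:restartedFGMv1}, cf.\ Remark~\ref{remun}) is essential throughout.
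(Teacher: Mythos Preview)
Your proposal follows essentially the same route as the paper: both arguments (i) use the forced decrease to guarantee $\mu(v,\tilde x_r)\geq\mu(v,x_0)$ for all $r$, (ii) invoke Proposition~\ref{prop:restart_x} with $\alpha=e^{-2}$ to obtain an $e^{-2}$ contraction at each ``long'' restart $K_r\geq K^*$, (iii) count the long restarts via Assumption~\ref{ass:K}, and (iv) sum the periods to $(J+2)2^{J-1}K_0$ and bound $2^{J-1}K_0\leq\ln(\delta_0/\epsilon)\max(K^*,K_0)$. Your write-up is in fact more explicit than the paper's about the conditioning and the tower property.

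One small bookkeeping discrepancy deserves attention. You (correctly) note that reaching $\tilde x_{2^J-1}$ uses restarts $r=0,\dots,2^J-2$, and so obtain $N_J=\sum_{j=j^*}^{J-1}2^{J-1-j}=2^{J-j^*}-1$. The paper instead writes $c_i(J)=2^{J-i}$, adding a ``$+1$'' for the restart at index $2^J-1$; but that restart produces $\tilde x_{2^J}$, not $\tilde x_{2^J-1}$, so your count is the honest one for the stated index. However, with $N_J=2^{J-j^*}-1$ and $J-j^*=\ceil{\log_2(\ln(\delta_0/\epsilon)/2)}$ one only gets $2N_J\geq\ln(\delta_0/\epsilon)-2$, i.e.\ $e^{-2N_J}\delta_0\leq e^{2}\epsilon$ rather than $\epsilon$; the paper's extra ``$+1$'' is exactly what is needed for the clean $\leq\epsilon$. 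This is an off-by-one wrinkle shared between the theorem statement, the paper's proof, and your proposal (and is fixed either by reading the conclusion at $\tilde x_{2^J}$---which is also consistent with the budget $K_0+\cdots+K_{2^J-1}$---or by adding $1$ to $J$); it does not affect the method or the asymptotic bound.
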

\begin{proof}
Define
$$
i =\ceil{\max\left(\log_2\left({K^*}/{K_0}\right), 0\right)}.
$$
Denote $\delta_r = \Exp[F(\tilde x_{r}) - F^*]$ and
\begin{align*}
&c_i(J):=\left |\left\{ l < 2^J-1 |  K_l \geq 2^i K_0\right\}\right | + 1 = 1 + \sum_{k=i}^{J-1} 2^{J-1-k} = 2^{J-i}.
\end{align*}
$c_i(J)$ is the number of restarts such that $K_l \geq K^*$, i.e.\ the number of 
restarts for which Proposition~\ref{prop:restart_x} applies (the ``$+1$'' comes from the restart number $2^J-1$).
Then it follows from Proposition~\ref{prop:restart_x}  that 
$$
\delta_{2^J-1}\leq e^{-2c_i(J)} \delta_0= e^{-(2^{J+1-i})} \delta_0.
$$
Since 
$$
J = \ceil{\max\left(\log_2\left({K^*}/{K_0}\right), 0\right)} + \ceil{\log_2\left({\ln\left({\delta_0}/{\epsilon}\right)}\right)} - 1 \geq  i-1+ {\log_2\left({\ln\left({\delta_0}/{\epsilon}\right)}\right)},$$
we deduce that $\delta_{2^{J}-1}\leq \epsilon$. This proves the first assertion.

By Assumption~\ref{ass:K},
\begin{align*}
K_0+\dots+K_{2^J-1}&=\sum_{j=0}^{J-1} \left |\{0\leq r < 2^J - 1 \;|\; K_r= 2^j K_0 \}\right | 2^jK_0+K_{2^J-1}\\
&=\sum_{i=0}^{J-1}2^{J-1-j}2^j K_0+2^J K_0=(J+2)2^{J-1}K_0.
\end{align*}
Since
$$
2^{J-1}\leq 2^{{\max\left(\log_2\left({K^*}/{K_0}\right), 0\right)} + {\log_2\left({\ln\left({\delta_0}/{\epsilon}\right)}\right)}}=\max(K^*/K_0,1)\ln(\delta_0/\epsilon),
$$
we obtain the bound defined as in~\eqref{a:boundK}.
\end{proof}

\begin{remark}
If we have an estimate of $\log(\delta_0/\epsilon)$, then we may modify the sequence of restarts in order to stop considering restarts with $K_0$ iterations after $\log(\delta_0/\epsilon)$ restarts. Indeed, if $K_0$ were greater than $K^*$ then the algorithm should already have terminated. Similarly, one can stop considering restarts with $2 K_0$ iterations after $2 \log(\delta_0/\epsilon)$ restarts, $2^j K_0$ after $(j+1) \log_2(\delta_0/\epsilon)$ restarts.
\end{remark}
%
%If $K(\mu) \geq K_0$, then one only needs $\log_2(1/\epsilon)$ restarts, and so on.
%
%So, we now 
%
%As in the previous cases, we have $\underline{K}_l \leq K_l \leq \log_2(1/\epsilon) \underline{K}_l $.
%
%Before the first restart such that $K_l \geq K(\mu)$ we need to wait at most $\max\big(\log(\frac{K(\mu)}{K_0}),0\big)K(\mu)$ iterations.
%
%After this point, we have $\underline K_l \geq \frac{K_l}{\log_2(1/\epsilon)}$ so we can reuse the result of \ref{sec:underlineK_fixed} and we get
%a total number of iterations equal to 
%\[
%\max\big(\log(\frac{K(\mu)}{K_0}),0\big)K(\mu) + 2 \log_2(\log_2(1/\epsilon))  \log_2(1/\epsilon)\max(K(\mu), K_0)
%\]

%\subsection{The case where $\epsilon$ depends on $\mu$}
%
%$\delta_l$ is not a computable quantity. Yet, it may be useful to consider
%a guarantee directly on a computable quantity like $\norm{T_L(x_l) - x_l}$.
%Hence, we may want to take $\epsilon = \epsilon_0  \mu^2$ for the case where the sequence does not depend on $\epsilon$ or double $\underline K$ after each $\log_2(\underline K^4/\epsilon_0)$ iterations for the case where the sequence does depend on $\epsilon$.

We summarize our theoretical findings in Table~\ref{tab:comprates} in four different regimes: when we know a lower bound $\underline{\mu}$, 
an upper bound $\bar \mu$, exactly the value of or nothing about  $\mu(v,x_0)$. 
We recall from~\eqref{a:estimationofmu} 
and~\eqref{a:thetaformu} that a non-trivial lower bound $\underline{\mu}$ can be much harder to be obtained than an upper bound   $\bar \mu$. In addition,  the complexity bound based on an upper bound $\bar \mu$
differs from the optimal scheme by a logarithm term.
\begin{table}
\centering
\resizebox{\textwidth}{!}{
\begin{tabular}{|c|c|c|c|}
\hline A priori knowledge& Theorem & Assumption  & Complexity bound \\
\hline $\mu(v,x_0)$ &  Corollary~\ref{cor:fixed_restart}& Assumption~\ref{ass2} &
$N^* $ \\
\hline $ \rule{0pt}{4ex}   \mu(v,x_0)\geq \underline{\mu} $ & Proposition~\ref{prop:restart_x}& Assumption~\ref{ass2} &
$O\left(N^*\sqrt{\frac{\mu(v,x_0)}{\underline{\mu}}}\right)$
\\
\hline \rule{0pt}{4ex}   $\mu_0>0$ & Theorem~\ref{th:strconv} & ~\eqref{a:geb} &
$O\left(N^*\left(\sqrt{\frac{\mu}{\mu_0}}+\sqrt{\frac{{\mu_0}}{\mu}}\right)\right)$
\\
\hline \rule{0pt}{4ex}   $ \mu(v,x_0)\leq \bar \mu$ &  Theorem~\ref{thm:variable} & Assumption~\ref{ass2}+~\ref{ass:K} &
$O\left(N^*\log_2\left(\ln\left(\frac{F(x_0)-F^*}{\epsilon}\right)\sqrt{\frac{\bar \mu}{{\mu(v,x_0)}}}\right)\right)$
\\ \hline\end{tabular}}
\caption{Complexity bound under different assumptions and a priori knowledge of the error bound constant. Here 
$N^*=O\left(\frac{\ln(1/\epsilon)}{\theta_0\sqrt{\mu(v,x_0)}}\right)$  is defined in~\eqref{a:sqrtmu}.}
\label{tab:comprates}
\end{table}
For comparison purpose, we recall  the log-scale grid search schedule proposed in~\cite{RouletAspremontNIPS}
for restart of APG or FISTA.
Fixing an integer $N>0$, the restart periods proposed in~\cite{RouletAspremontNIPS} can be described as follows:
\begin{align*}
&K_0=\dots=K_{\ceil{N/2}}=2\\
&K_{\ceil{N/2}+1}=\dots=K_{\ceil{N/2}+\ceil{N/2^2}}=2^2
\\& \small\vdots\\
&K_{\ceil{N/2}+\dots\ceil{N/2^{i-1}}+1}=\dots=K_{\ceil{N/2}+\dots+\ceil{N/2^i}}=2^i
\end{align*} 
To ensure a nearly linear time convergence with logarithm difference, 
the inner number of iterations $N$ is required to be larger than $2e\sqrt{L/\mu(x_0)}$. Therefore a lower bound $\underline\mu \leq \mu(x_0)$ is needed.

\section{Extension to other randomized accelerated methods}
\label{sec:acc-svrg}

Proposition \ref{prop:restart_x} and Theorem~\ref{thm:variable} only rely on
the fact that APPROX guarantees 
\begin{equation}
\label{eq:generalization}
\Exp[F(x_k)-F^*] \leq \frac{1}{(k+a)^2}\Big(C_F(F(x_0) - F^*) + \frac{C_d}{ 2} \dist(x_0, \mathcal X^*)^2 \Big)
\end{equation}

A couple of other algorithms have such convergence guarantees. Instead of assuming that $\psi$ is separable, they assume that $f$ is a sum of functions $f(x) = \frac{1}{2n_f}\sum_{j=1}^{n_f} f_j(x)$:
\begin{itemize}
	\item Katyusha$^{\text{ns}}$~\cite{allen2017katyusha} (an accelerated stochastic variance reduced gradient method)
	satisfies for its output vector $\tilde x^S$
	\begin{align*}
	\Exp[F(\tilde x^S) - F^*] \leq \frac{8}{(S+3)^2}\Big(2(F(x_0) - F^*) + \frac{3L}{2m} \dist(x_0, \mathcal X^*)^2 \Big)
	\end{align*}
	where $m = 2n_f$ is the number of SVRG steps 
	between each momentum step, $n_f$ is the number
	of summands in the definition of $f$ and the other symbols follow the notation of this paper.
	
	\item DASVRDA$^{\text{ns}}$~\cite{murata2017doubly} (another accelerated stochastic variance reduced gradient method) has a similar guarantee
	\begin{multline*}
	\Exp[F(\tilde x^S) - F^*] \leq \frac{4}{(S+2)^2}\Big((F(x_0) - F^*) \\
	+ \frac{2(1+\gamma (m+1))L}{(1-\gamma^{-1})m(m+1)} \dist(x_0, \mathcal X^*)^2 \Big)
	\end{multline*}
	where $\gamma \geq 3$ is a parameter of the method.
	
\end{itemize}

From~\eqref{eq:generalization} we obtain directly:
\[
	\Exp[F(x_k)-F^*] \leq \frac{1}{(k+a)^2}\Big(C_F + \frac{C_d}{\mu( x_0)}\Big)\Big(F(x_0) - F^* \Big).
	\]
which is a strict contraction if 
\begin{equation}
\label{eq:kstar_general}
k\geq K^* := \left\lceil \frac{1}{e}\sqrt{C_F + \frac{C_d}{\mu(x_0)}}-a\right \rceil \;.
\end{equation} 
Then 
the same conlusion as Corollary~\ref{cor:fixed_restart} and Theorem~\ref{thm:variable} can be obtained.

\begin{theorem}
	Let $\cA$ be an algorithm satisfying \eqref{eq:generalization}.
Consider restarted $\cA$  in the fashion of Algorithm~\ref{algo:restartedFGMv2}.  Then, to obtain 
$$
\Exp[F(\tilde x_r)-F^*]\leq \epsilon,
$$
the number of inner iterations is bounded by
$$
\ln\left(\frac{F(x_0)-F^*}{\epsilon}\right) K^*,
$$ 
	if the restart periods  are all equal to $K^*$ defined in~\eqref{eq:kstar_general}, and by
$$
\big(\ceil{\max\left(\log_2\left({K^*}/{K_0}\right), 0\right)} + \ceil{\log_2\left({\ln\left({\delta_0}/{\epsilon}\right)}\right)}+1\big) {\ln\left({\delta_0}/{\epsilon}\right)}\max(K^*, K_0)
$$
if the restart periods  satisfy Assumption~\ref{ass:K}.
\end{theorem}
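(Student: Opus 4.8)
The plan is to replay the arguments of Proposition~\ref{prop:restart_x}, Corollary~\ref{cor:fixed_restart} and Theorem~\ref{thm:variable}, substituting the abstract guarantee~\eqref{eq:generalization} for the APPROX bound of Proposition~\ref{prop:approx_basic}; indeed the earlier proofs use nothing about the algorithm beyond a convergence estimate of the form~\eqref{eq:generalization}. First I would establish the one-restart contraction. Starting from~\eqref{eq:generalization} and invoking the local error bound~\eqref{a:strconv2} (with $v=\vec e$) to replace $\dist(x_0,\cX^*)^2$ by $\tfrac{2}{\mu(x_0)}(F(x_0)-F^*)$, I obtain the displayed inequality preceding the theorem, $\Exp[F(x_k)-F^*]\leq (k+a)^{-2}\big(C_F+C_d/\mu(x_0)\big)(F(x_0)-F^*)$. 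Fixing the target factor $\alpha=e^{-2}$, this is a strict contraction as soon as $k\geq K^*$, with $K^*$ given by~\eqref{eq:kstar_general}, since $K^*$ is precisely the smallest $k$ forcing $(k+a)^{-2}(C_F+C_d/\mu(x_0))\leq e^{-2}$. As in Proposition~\ref{prop:restart_x}, the conditional update of Algorithm~\ref{algo:restartedFGMv2} can only decrease the objective, so the contraction survives the indicator and $\Exp[F(\tilde x_{r+1})-F^*]\leq e^{-2}\,\Exp[F(\tilde x_r)-F^*]$ whenever $K_r\geq K^*$.

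For the fixed-period claim I would argue exactly as in Corollary~\ref{cor:fixed_restart}. The forced decrease guarantees $F(\tilde x_r)\leq F(x_0)$ with probability one, so $[F\leq F(\tilde x_r)]\subseteq[F\leq F(x_0)]$ and hence $\mu(\tilde x_r)\geq \mu(x_0)$ (cf.\ Proposition~\ref{prop:localerrorboundonallcompacts}); thus the value $K^*$ computed at $x_0$ stays admissible at every restart. Conditioning on $\tilde x_r$ and using the tower rule, the per-restart contraction composes to $\delta_r\leq e^{-2r}\delta_0$ with $\delta_r=\Exp[F(\tilde x_r)-F^*]$. Taking $r=\ceil{\ln(\delta_0/\epsilon)}$ gives $\delta_r\leq e^{-2\ln(\delta_0/\epsilon)}\delta_0=\epsilon^2/\delta_0\leq\epsilon$, and since each of the $r$ restarts costs $K^*$ inner iterations the total is bounded by $\ln(\delta_0/\epsilon)K^*$, as claimed.

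For the variable-period claim I would transcribe the proof of Theorem~\ref{thm:variable} almost verbatim, the only change being that Proposition~\ref{prop:restart_x} is replaced by the one-restart contraction just derived. Setting $i=\ceil{\max(\log_2(K^*/K_0),0)}$, the restarts with $K_l\geq 2^iK_0\geq K^*$ are exactly those that contract by $e^{-2}$; by Assumption~\ref{ass:K} there are $c_i(J)=2^{J-i}$ of them among $l<2^J-1$ (the terminal restart $2^J-1$ accounting for the ``$+1$''). The restarts with shorter periods need not contract, but thanks again to the forced decrease they never increase $\delta_r$; hence the gaps chain multiplicatively over the contracting restarts to give $\delta_{2^J-1}\leq e^{-2c_i(J)}\delta_0=e^{-2^{J+1-i}}\delta_0$. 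Choosing $J$ as in Theorem~\ref{thm:variable} makes this at most $\epsilon$, and the iteration count $K_0+\dots+K_{2^J-1}=(J+2)2^{J-1}K_0$ together with $2^{J-1}\leq \max(K^*/K_0,1)\ln(\delta_0/\epsilon)$ yields the stated complexity.

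The genuinely delicate point, as in the APPROX analysis, is that~\eqref{eq:generalization} holds only in expectation, so one cannot certify on any single run that a given restart was long enough; the whole argument hinges on the forced monotone decrease built into Algorithm~\ref{algo:restartedFGMv2}, which keeps every restart point inside $[F\leq F(x_0)]$ almost surely and thereby preserves the error-bound constant $\mu(x_0)$ across restarts. Verifying that this monotonicity lets the conditional expectations compose correctly through the tower rule -- both for the clean geometric recursion in the fixed case and, more delicately, for the multiplicative chaining over only the \emph{long} restarts in the variable case -- is the crux; everything else is a direct rewriting of the earlier proofs with $(C_F,C_d,a)$ in place of the APPROX constants.
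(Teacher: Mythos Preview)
Your proposal is correct and follows exactly the route the paper intends: the paper's own proof is the one-liner ``the proof arguments are the same as in the coordinate descent case,'' and you have faithfully unpacked those arguments (Proposition~\ref{prop:restart_x}, Corollary~\ref{cor:fixed_restart}, Theorem~\ref{thm:variable}) with the generic constants $(C_F,C_d,a)$ in place of the APPROX-specific ones. Your explicit remark that the short restarts in the variable case do not increase $\delta_r$ because of the forced monotone decrease is actually more careful than the paper's terse reference back to Proposition~\ref{prop:restart_x}.
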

\begin{proof}
	The proof arguments are the same as in the coordinate descent case.
\end{proof}

\begin{remark}
The proof of Theorem~\ref{th:strconv} requires to go deeper into the 
specificities of accelerated coordinate descent. Hence, extending restart at any fixed frequency to
any randomized accelerated method is not trivial, and beyond the scope of this paper.
\end{remark}

\section{Numerical experiments}
\label{sec:expe}

\subsection{Logistic regression}

%Experiment: $lambda_1 = 10000$, lambda2 = 0, mu-init = v/100 pour deterministic, v/1000 et v/1e7 (? faire) pour apcg

We solve the following logistic regression problem:
\begin{align}\label{alr}
\min_{x \in \mathbb{R}^N} \frac{\lambda_1}{\|A^\top b\|_{\infty}} \sum_{j=1}^m \log(1 + \exp(b_j a_j^\top x)) + \norm{x}_1 %+\frac{\lambda_2}{2}\|x\|^2
\end{align}
We consider 
$$
f(x)= \frac{\lambda_1}{ \|A^\top b\|_{\infty}} \sum_{j=1}^m \log(1 + \exp(b_j a_j^\top x)) ,
$$
and 
$$
\psi(x)=\norm{x}_1. %+\frac{\lambda_2}{2}\|x\|^2.
$$
In particular, for serial sampling ($\tau=1$),~\eqref{eq:ESO} is satisfied for
\begin{align}\label{av}
v_i=\frac{\lambda_1}{ 4\|A^\top b\|_{\infty}}\sum_{i=1}^m (b_jA_{ij})^2,\enspace i=1,\dots,n.
\end{align}
%Then for the latter $v$, Assumption \ref{ass2} is satisfied for all $x$ with
%\begin{align}\label{amupsi}
%\mu(v, x_0)=
%\mu_{\psi}:=\frac{\lambda_2}{\max_i v_i}.
%\end{align}

Even if the logistic loss
is not strongly convex, we expect that the local curvature around the optimum is nonzero
and so, restarting APPROX may be useful.

We run our numerical comparison on the dataset RCV1~\cite{CC01a} with regularization parameter $\lambda_1 = 10^4$. We compare the following algorithms.
\begin{itemize}
\item Randomized coordinate descent~\cite{RT:UCDC}.
\item APPROX~\cite{FR:2013approx}.
\item APCG~\cite{lin2014accelerated}: we run APCG using three different settings for the parameter $\mu$ in the algorithm: $10^{-3}$, $10^{-5}$ and $10^{-7}$. We stop the program when the duality gap is lower than $10^{-8}$ or the running time is larger than 6,000s. The results are reported in Figure~\ref{fig:d0}. 
\item Prox-Newton: we modified the implementation of~\cite{johnson2015blitz} in order to deactivate feature selection, which is not the topic of the present paper, and we increased the maximum number of inner iterations to be able to obtain high accuracy solutions. Each prox-Newton step is solved approximately using coordinate descent.
\item APPROX-restart (Algorithm~\ref{algo:restartedFGMv2}) with fixed frequency set as if we knew the error bound constant. As for APCG, we tried values equal to $10^{-3}$, $10^{-5}$ and $10^{-7}$. 
\item APPROX-restart (Algorithm~\ref{algo:restartedFGMv2})
with the restart sequence given in Section~\ref{sec:variable_restart} and $K_0 = \ceil{20 e / \theta_0}$. Most problems encountered in practice have a conditioning larger than 100,
hence this choice leads to $K_0 \leq K^*$ in most cases.
\end{itemize}

%, where by $\mu$ we refer to $\mu(v,x_0)$ defined in Assumption~\ref{ass2}.

 Note that the convergence of APCG is only proved for $\mu$ smaller than the strong convexity coefficient in~\cite{lin2014accelerated}.  In our experiments, we observed numerical issues when running APCG for several cases when taking larger $\mu$ (we were
 not able to compute the $i$th partial derivative at $y_k = \rho_k w_k + z_k$ because $\rho_k$ had reached the 
 double precision float limit).  Such a case can be identified in the plot when the line corresponding to APCG 
 stops abruptly before the time limit (6000s) with a precision worse than $10^{-8}$.
 
Fixed restart can give the best performance among all tested algorithms
but the setting of the restart period has a large influence on the performance. Note that the objective function is not strongly convex, so 
the linear convergence is guaranteed only if the restart period is large enough. 

On the other hand, variable restart APPROX is nearly as fast as the best among the fixed restart, although $K_0$ was set with a clearly under optimal value. Moreover, it has a clear theoretical guarantee for any initial restarting period $K_0$.

\begin{figure}[htbp]
\centering
\includegraphics[width=\linewidth, trim=8em 8ex 8em 6ex, clip]{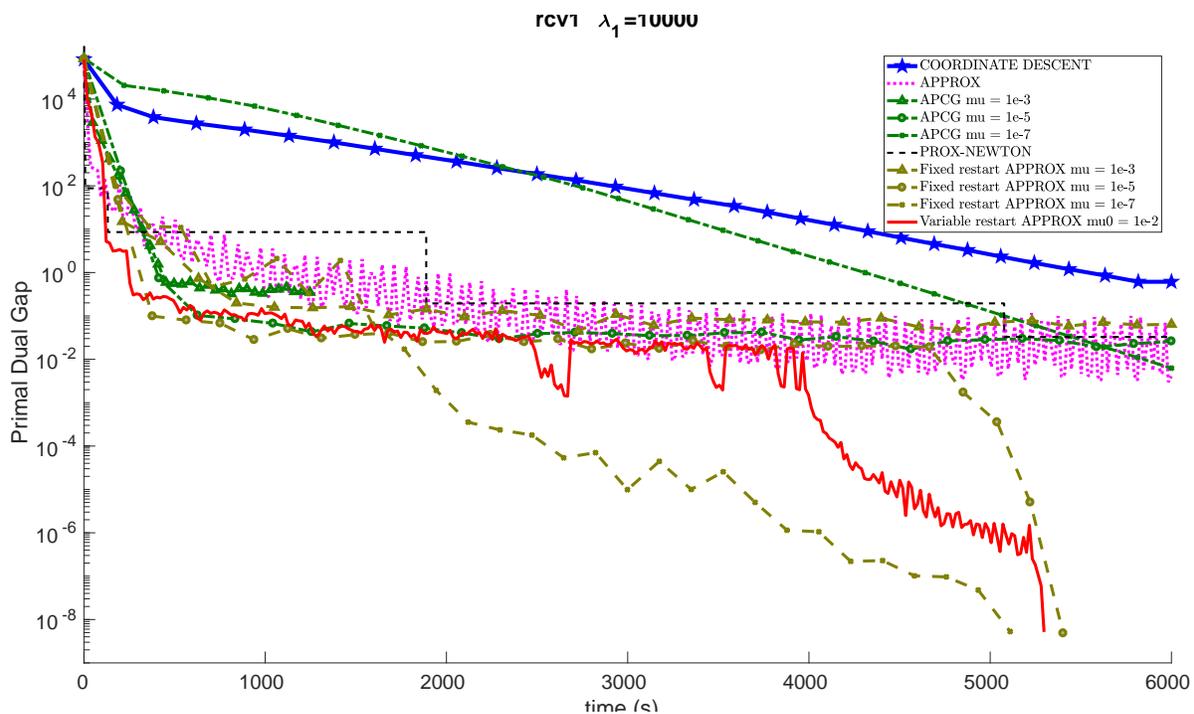}
\caption{Comparison of (accelerated) coordinate descent algorithms for the logistic regression problem on the dataset RCV1: coordinate descent, APPROX, APCG with $\mu \in \{10^{-3}, 10^{-5}, 10^{-7}\}$, Prox-Newton, fixed frequency restarted APPROX with $\mu \in \{10^{-3}, 10^{-5}, 10^{-7}\}$ and APPROX with variable restart initiated with $K_0 = \ceil{20 e / \theta_0}$. APCG failed for $\mu = 10^{-3}$.
}
\label{fig:d0}
\end{figure}

\subsection{Lasso path}

We then present experiments on the $L^1$-regularised least squares problem (Lasso)
\begin{align}\label{a:LASSO}
\min_{x \in \mathbb{R}^N} \frac{1}{2} \norm{A x - b}^2_2 + \lambda \norm{x}_1.
\end{align}
We consider solving a set of such problems for $\lambda \in \{\lambda_0, \lambda_1, \ldots, \lambda_T\}$, where $T = 10$, $\lambda_0 = \norm{A^\top b}_{\infty}$, $\lambda_t = \lambda_0 \alpha^t$ and
$\alpha^T = 10^{-3}$. This procedure is called pathwise optimization~\cite{friedman2007pathwise} and is often considered for statistical problems with hyper-parameters.

We selected 6 data sets from the LibSVM dataset repository~\cite{CC01a}. We
centered and normalized the columns. 

We did not run APCG on this problem because, as shown on the logistic regression experiment, the strong convexity estimate has a dramatic consequence on the behaviour of the algorithm and setting it to a too high value may lead to major numerical issues.

We chose to restart APPROX with variable restart set as follows. 
In the beginning, we start with $10n$ iterations of non-accelerated coordinate descent. After that, we run variable restart APPROX with $K_0 = 10n$ and we double $K_0$ after each $\log_2(1/\epsilon)$ iterations. 
When the duality gap at $\lambda_t$ reaches $\epsilon$, we 
switch to $\lambda_{t+1}$. We perform a warm start on the 
optimization variable but we also set $K_0$ to the 
last value it had when solving the problem at $\lambda_t$.

We then compare coordinate descent~\cite{RT:UCDC}, APPROX~\cite{FR:2013approx} and
restarted APPROX (Algorithm~\ref{algo:restartedFGMv2}) on the 6 pathwise optimization lasso problems.

APPROX is sometimes getting into trouble when high accuracy is requested. This does not happen with variable restart APPROX. In all the experiments, variable restart APPROX is at most twice as slow as coordinate descent. On the other hand, variable restart APPROX is much faster on problems requiring more computational resources.

\subsection{Lasso with SVRG}

Our last experiment illustrate the restart of accelerated stochastic variance reduced 
gradient on a Lasso problem~\eqref{a:LASSO}. We took the \texttt{abalone} dataset and $\lambda = \lVert A^\top b \rVert_{\infty} / 1000$ and solved the problem using 
SVRG~\cite{allen2016improved}, Katyusha$^{\text{ns}}$~\cite{allen2017katyusha} and restarted variants of Katyusha$^{\text{ns}}$ described in Section~\ref{sec:acc-svrg}.
As shown on Figure~\ref{fig:acc-svrg}, restarted accelerated SVRG is able to 
solve the problem faster than previously proposed methods.

\begin{figure}
\centering
\includegraphics[width=0.6\linewidth]{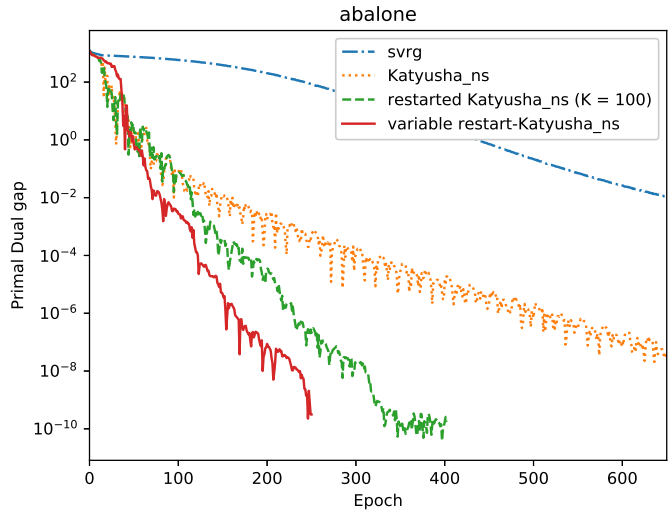}
\caption{Comparison of vanilia, accelerated and restarted versions of SVRG. An epoch corresponds to $m$ SVRG steps.}
\label{fig:acc-svrg}
\end{figure}

%\begin{landscape}
\begin{table}
\small

\centering

\begin{tabular}{|l|rrrr|}
\hline
Data set    &      Accuracy  &     Coordinate  &  APPROX  &  Variable restart   \\
            &                & descent  &        & APPROX \\
\hline
%diabetes & $10^{-2}$ & 0.009 & 0.003  &  0.003   \\
%$n$ = 10 &$10^{-6}$&  0.009  &   0.009 &  0.010    \\
%$m$ = 442 &$10^{-10}$&   0.020 &  0.016  &    0.019 \\
%\hline 
abalone & $10^{-2}$&  0.066 &  0.045    &   0.047  \\
$n$ = 8 &$10^{-6}$& 0.211 &   \bf 0.126 &    0.133  \\
$m$ = 4,177 &$10^{-10}$&  0.404  &  0.271  &  \bf   0.257\\
\hline 
%bodyfat & $10^{-2}$&  0.002 &   0.003  &  0.003  \\
%$n$ = 14 & $10^{-6}$&  0.007 &   0.007 &   0.007  \\
%$m$ = 252 & $10^{-10}$&  0.013 &    0.014 &    0.015 \\
%\hline
%cadata & $10^{-2}$&  0.067 &  0.059    &  0.071    \\
%$n$ = 8 &$10^{-6}$& 0.280 &   0.332 &   0.335   \\
%$m$ = 20,640 &$10^{-10}$& 0.707  &  0.718  &    0.785 \\
%\hline
%cpusmall & $10^{-2}$& 0.016 & 0.017   &  0.017     \\
%$n$ = 12 &$10^{-6}$& \bf 0.038   &   0.088 &   0.082   \\
%$m$ = 8,192 &$10^{-10}$& \bf 0.062  &   0.172 &    0.167 \\
%\hline
triazines & $10^{-2}$&  0.194 & \bf 0.038  &  0.060   \\
$n$ = 60 &$10^{-6}$& \em $>$2.442  &   0.989 & \bf 0.379   \\
$m$ = 186 &$10^{-10}$&  \em $>$3.788 & \em $>$9.850   &   \bf 1.038 \\
\hline
leukemia &   $10^{-2}$ & \bf 0.113 & 0.174  &   0.189  \\ 
$n$ = 7129 &  $10^{-6}$&  10.367  &  11.735 & \bf 5.132  \\
$m$ = 72  &   $10^{-10}$&  97.532 &  \em $>$435.002   & \bf  23.223 \\
\hline
blogfeedback&   $10^{-2}$  &  6.732 & \bf 2.561  & 3.868   \\
$n$ = 280 &  $10^{-6}$&  452.177 &  63.832 &  \bf 54.962 \\
$m$ = 52,397 &   $10^{-10}$& \em $>$1617.721 & \em $>$1845.049  & \bf  208.319 \\
\hline
rcv1 & $10^{-2}$&  \bf 2.898 & 4.381 &  4.323   \\
$n$ = 47236&$10^{-6}$& 227.590 &  108.778& \bf 69.616   \\
$m$ = 20242&$10^{-10}$&    2986.070& 943.178  & \bf  180.206  \\
\hline
news20  & $10^{-2}$& {\bf 33}\phantom{.000} & 64\phantom{.000} & 67\phantom{.000}   \\
$n$ = 1,355,191&$10^{-6}$& 8161\phantom{.000} & 2594\phantom{.000} & {\bf 1869}\phantom{.000}   \\
$m$ = 19,996&$10^{-10}$& \em $>$36000\phantom{.000}& \em $>$82000\phantom{.000} & {\bf 7173}\phantom{.000} \\
\hline
\end{tabular}

\caption{Time is seconds to compute the Lasso path over a grid. The $>$ sign means that the algorithm had not reach the target accuracy on the duality gap after 40,000 $n$ coordinate updates, for at least one $\lambda_t$ in the grid. We put the number in boldface when an algorithm is at most 50\% faster than another.}
\end{table}
%\end{landscape}

\appendix

\section{Proof of Lemma~\ref{l:thetak} and Theorem~\ref{th:strconv}}

\begin{proof}[Proof of Lemma~\ref{l:thetak}]
The equation~
\eqref{arectheta} holds because $\theta_{k+1}$ is the unique positive square root
to the polynomial $P(X) = X^2 + \theta_k^2 X - \theta_k^2$. \eqref{atheradecr} is a direct
consequence of~\eqref{arectheta}.

Let us prove \eqref{athetabd} by induction. 
It is clear that $\theta_0 \leq \frac{2}{0+2/\theta_0}$. 
Assume that $\theta_k \leq \frac{2}{k+2/\theta_0}$.
We know that $P(\theta_{k+1}) = 0$ and that $P$ is an increasing function on $[0 , +\infty]$. So we just need to show that $P\big(\frac{2}{k+1+2/\theta_0}\big)\geq 0$.
\begin{align*}
P\Big(\frac{2}{k+1+2/\theta_0}\Big) = \frac{4}{(k+1+2/\theta_0)^2} 
+ \frac{2}{k+1+2/\theta_0}\theta_k^2 - \theta_k^2
\end{align*}
As $\theta_k \leq \frac{2}{k+2/\theta_0}$ and $\frac{2}{k+1+2/\theta_0}-1 \leq 0$,
\begin{align*}
P\Big(\frac{2}{k+1+2/\theta_0}\Big) &\geq \frac{4}{(k+1+2/\theta_0)^2} 
+ \Big(\frac{2}{k+1+2/\theta_0} - 1\Big) \frac{4}{(k+2/\theta_0)^2} \\
 &= \frac{4}{(k+1+2/\theta_0)^2 (k+2/\theta_0)^2} \geq 0.
\end{align*}

For the other inequality,  $\frac{(2-\theta_0)}{0+(2-\theta_0)/\theta_0} \leq \theta_0$.
%Let us assume that $\frac{1}{k+1/\theta_0} \leq \theta_k \leq \frac{2}{k+2/\theta_0}$.
We now assume that $\theta_k \geq \frac{(2-\theta_0)}{k+(2-\theta_0)/\theta_0}$ but
that $\theta_{k+1} < \frac{(2-\theta_0)}{k+1+(2-\theta_0)/\theta_0}$. 
Remark that $(x \mapsto (1-x)/x^2)$ is strictly decreasing for $x \in (0,2)$. Then, using \eqref{arectheta}, %and the inequality $\theta_k \leq \frac{2}{k+2/\theta_0}$ that we just proved, 
we have
\begin{align*} 
\frac{(k+1+(2-\theta_0)/\theta_0)^2}{(2-\theta_0)^2} -\frac{k+1+(2-\theta_0)/\theta_0}{2-\theta_0}< \frac{1-\theta_{k+1}}{\theta_{k+1}^2}  \overset{\eqref{arectheta}}{=} \frac{1}{\theta_k^2} \leq \frac{(k+(2-\theta_0)/\theta_0)^2}{(2-\theta_0)^2}.
\end{align*}
This is equivalent to
\begin{equation*}
(2 - (2- \theta_0)) (k+(2-\theta_0)/\theta_0) + 1 < (2-\theta_0)
\end{equation*}
which obviously does not hold for any $k \geq 0$.
So $\theta_{k+1} \geq \frac{(2-\theta_0)}{k+1+(2-\theta_0)/\theta_0}$.
\end{proof}

\begin{proof}[Proof of Theorem \ref{th:strconv}]
The proof is organised in 4 steps. Firstly, we derive a one-iteration inequality,
secondly, we identify conditions under which this inequality is a supermartingale inequality,
thirdly, we give a solution to the set of inequalities and finally, we bound the rate we obtain
by a simpler formula.

{\em Step 1: Derive a one-iteration inequality.}

Let us denote $\hat F_k = f(x_k) + \sum_{l=0}^k \gamma_k^l \psi(z_l) \geq F(x_k)$ where $\gamma_k^l$ are the same constants as defined in~\cite{FR:2013approx}. By~\cite{FR:2013approx}, for any $x_* \in \cX^*$,
\begin{align}\label{a:Fk}
\Exp_k[\hat F_{k+1}-F^*] \leq (1-\theta_k)(\hat F_k-F^*) + \frac{\theta_k^2}{2 \theta_0^2}\big(\norm{z_k - x_*}_v^2 - \Exp_k[\norm{z_{k+1}-x_*}_v^2]\big)
\end{align}

Using 
\[
x_{k+1} = (1-\theta_k)x_k + \frac{\theta_k}{\theta_0}z_{k+1} - \frac{\theta_k}{\theta_0}(1-\theta_0) z_k
\]
and the equality 
\[
\norm{(1-\lambda)a + \lambda b}^2 = (1-\lambda)\norm{a}^2 + \lambda \norm{b}^2 - \lambda(1-\lambda) \norm{a-b}^2
\]
which is valid for any vectors $a$ and $b$ and any $\lambda \in \mathbb R$, we get, denoting $x_*$ the unique element of $\cX^*$, 
\begin{align}
\label{eq:decompose_xk1}
\norm{x_{k+1}-x_*}_v^2 = (1-\theta_k)\norm{x_k-x_*}_v^2 + \frac{\theta_k}{\theta_0}\norm{z_{k+1}-x_*}_v^2 - \frac{\theta_k}{\theta_0}(1-\theta_0)\norm{z_k - x_*}_v^2 \notag \\
+ \frac{(1-\theta_k)\theta_k/\theta_0(1-\theta_0)}{1-\theta_k/\theta_0}\norm{x_k-z_k}_v^2 - \frac{\theta_k/\theta_0}{1-\theta_k/\theta_0}\norm{x_{k+1}- z_{k+1}}_v^2
\end{align}

Let us consider nonnegative sequences $(a_k)$, $(b_k)$, $(c_k)$, $(d_k)$ and $\sigma^K_k$ and study the quantity 
\begin{multline*}
C_{k+1} =  a_{k+1}(\hat F_{k+1} - F^*) + \frac{b_{k+1}}{2} \norm{x_{k+1} - x_*}_v^2 + \frac{c_{k+1}}{2} \norm{z_{k+1} - x_*}_v^2\\ + \frac{d_{k+1}}{2} \norm{x_{k+1}-z_{k+1}}_v^2
\end{multline*}
By the strong convexity condition of $F$, we have:
\[
\frac 12 \|x_{k+1}-x_*\|_v^2 \leq \frac{1}{\mu}(F(x_{k+1}) - F^*).
\]
Hence, we get
for any $\sigma_{k+1}^K \in [0,1]$, (the usefulness of superscript $K$ will become clear later)
\begin{align*}
\Exp_k[& C_{k+1}] \overset{\eqref{eq:decompose_xk1}}{\leq}  \Exp_k\Big[(a_{k+1}+\frac{b_{k+1}\sigma^K_{k+1}}{\mu})(\hat F_{k+1} - F^*) \\
&+ \frac{b_{k+1}}{2}(1-\sigma^K_{k+1})(1-\theta_k)\norm{x_k-x_*}_v^2 \\
& +  \big(c_{k+1} + b_{k+1}(1-\sigma^K_{k+1})\frac{\theta_k}{\theta_0} \big)\frac 12 \norm{z_{k+1}-x_*}_v^2 \\
& -\frac{b_{k+1}}{2}(1-\sigma^K_{k+1}) \frac{\theta_k}{\theta_0}(1-\theta_0)\norm{z_k - x_*}_v^2 \\
&+ \frac{b_{k+1}}{2}(1-\sigma^K_{k+1})\frac{(1-\theta_k)\theta_k/\theta_0(1-\theta_0)}{1-\theta_k/\theta_0}\norm{x_k-z_k}_v^2 \\
&+ \big(d_{k+1} - b_{k+1}(1-\sigma^K_{k+1})\frac{\theta_k/\theta_0}{1-\theta_k/\theta_0}\big) \frac 12\norm{x_{k+1}- z_{k+1}}_v^2
  \Big]
\end{align*}
  % % % % % % % % % % %
Applying~\eqref{a:Fk} we get:
\begin{align*}
\Exp_k[& C_{k+1}]\leq  (1-\theta_k)(a_{k+1}+\frac{b_{k+1}\sigma^K_{k+1}}{\mu})(\hat F_{k} - F^*) \\
&+ \frac{b_{k+1}}{2}(1-\sigma^K_{k+1})(1-\theta_k)\norm{x_k-x_*}_v^2 \\
&+  \Big(c_{k+1} + b_{k+1}(1-\sigma^K_{k+1})\frac{\theta_k}{\theta_0} -\big(a_{k+1}+\frac{b_{k+1}\sigma^K_{k+1}}{\mu}\big)\frac{\theta_k^2}{\theta_0^2} \Big)\frac 12 \Exp_k[\norm{z_{k+1}-x_*}_v^2]\\
& \Big(\big(a_{k+1}+\frac{b_{k+1}\sigma^K_{k+1}}{\mu}\big)\frac{\theta_k^2}{\theta_0^2}-b_{k+1}(1-\sigma^K_{k+1}) \frac{\theta_k}{\theta_0}(1-\theta_0)\Big)\frac 12 \norm{z_k - x_*}_v^2 \\
&+ \frac{b_{k+1}}{2}(1-\sigma^K_{k+1})\frac{(1-\theta_k)\theta_k/\theta_0(1-\theta_0)}{1-\theta_k/\theta_0}\norm{x_k-z_k}_v^2 \\
&+ \big(d_{k+1} - b_{k+1}(1-\sigma^K_{k+1})\frac{\theta_k/\theta_0}{1-\theta_k/\theta_0}\big) \frac 12\Exp_k[\norm{x_{k+1}- z_{k+1}}_v^2] 
\end{align*}

{\em Step 2: Conditions to get a supermartingale.}

In order to have a bound, we need the following constraints on the parameters for $k \in \{0, \ldots, K-1\}$:
\begin{align}
a_k & \geq (1-\theta_k)(a_{k+1}+\frac{b_{k+1}\sigma^K_{k+1}}{\mu}) \label{a:F} \\
b_k & \geq b_{k+1}(1-\sigma^K_{k+1})(1-\theta_k) \label{b:x} \\
c_{k+1} & \leq \big(a_{k+1}+\frac{b_{k+1}\sigma^K_{k+1}}{\mu}\big)\frac{\theta_k^2}{\theta_0^2} -  b_{k+1}(1-\sigma^K_{k+1})\frac{\theta_k}{\theta_0} \label{c1:z} \\
c_k & \geq \big(a_{k+1}+\frac{b_{k+1}\sigma^K_{k+1}}{\mu}\big)\frac{\theta_k^2}{\theta_0^2}-b_{k+1}(1-\sigma^K_{k+1}) \frac{\theta_k}{\theta_0}(1-\theta_0) \label{c2:z} \\
d_{k+1} & \leq b_{k+1}(1-\sigma^K_{k+1})\frac{\theta_k/\theta_0}{1-\theta_k/\theta_0} \label{d1:xz} \\
d_k & \geq b_{k+1}(1-\sigma^K_{k+1})\frac{(1-\theta_k)\theta_k/\theta_0(1-\theta_0)}{1-\theta_k/\theta_0} \label{d2:xz}
\end{align}
We also add the constraint 
\begin{equation}
\label{a0}
a_0 = (1-\theta_0)(b_0 + c_0) = \frac{1-\theta_0}{\theta_0^2}
\end{equation}
in order to get a bound involving $\Delta(x_0)$.

If we can find a set of sequences satisfying \eqref{a:F}-\eqref{a0}, then we have
\[
a_K (F(x_K) - F^*) + \frac{b_K}{2} \norm{x_{k+1} - x_*}_v^2 \leq \Delta(x_0).
\]

{\em Step 3: Explicit solution of the inequalities.}

By analogy to the gradient case, we are looking for such sequences saturating \eqref{a:F}-\eqref{c2:z}. 
For $k \in \{1,\dots, K-1\}$, equality in \eqref{c1:z} and \eqref{c2:z} can be fulfilled only if
\[
\sigma^K_k = \frac{1 - \frac{\theta_k}{\theta_{k-1}}\frac{1-\theta_0}{1-\theta_k}}{1 + \frac{\theta_{k-1}}{\theta_0 \mu}}, \qquad k \in \{1,\dots, K-1\}
\]
For $k = K$, we would like to ensure $c_K = 0$ and $a_K = (1-\theta_0)b_K$. This can be done by taking
\[
\sigma^K_K = \frac{1 - \frac{\theta_{K-1}}{\theta_0}(1-\theta_0)}{1 + \frac{\theta_{K-1}}{\theta_0\mu}}
\]
Having found $(\sigma^K_k)$, we can get $(b_k)$ as a function of $b_0$ through the equality in~\eqref{b:x} 
\[
b_k = \prod_{l=0}^{k-1} \frac{1}{(1-\theta_l)(1-\sigma^K_{l+1})}b_0 = \frac{\theta_{-1}^2}{\theta_{k-1}^2}\prod_{l=1}^{k} \frac{1}{1-\sigma^K_{l}}b_0 ,
\]
$(a_k)$ as a function of $b_0$ through the equality in~\eqref{a:F}
\[
a_k = \frac{\theta_{-1}^2}{\theta_{k-1}^2} a_0 - \sum_{l=1}^k \frac{\sigma^K_l}{\mu}\frac{\theta_{-1}^2}{\theta_{l-1}^2}b_l.
\]
Using equality in~\eqref{c1:z} and \eqref{c2:z}, we get the relation
\[
c_{k+1} = c_k - \frac{\theta_k}{1-\theta_k}b_k
\]
and so 
\[
c_k = c_0 - \sum_{l=0}^{k-1} \frac{\theta_l}{1-\theta_l}b_l
\]
This gives us the opportunity to calculate $b_0$ because
\[
c_K = 0 = c_0 - \sum_{l=0}^{K-1} \frac{\theta_l}{1-\theta_l}b_l
\]
Hence, we get two equalities:
\begin{align*}
c_0 &+ b_0 = \frac{1}{\theta_0^2} \\
c_0 &= \sum_{l=0}^{K-1} \frac{\theta_l}{1-\theta_l} \frac{\theta_{-1}^2}{\theta_{l-1}^2}\prod_{j=1}^{l} \frac{1}{1-\sigma^K_{j}} b_0
\end{align*}
and so since $\theta_{l-1}^2 = \theta_l^2/(1-\theta_l) $
\[
b_0 = \frac{1}{\theta_0^2} \frac{1}{1 + \sum_{l=0}^{K-1}  \frac{\theta_{-1}^2}{\theta_{l}}\prod_{j=1}^{l} \frac{1}{1-\sigma^K_{j}}}
\]

Finally, we need to check that there is a feasible sequence $(d_k)$. 
Indeed such a sequence exists because the upper and lower bound satisfy 
\begin{align*}
b_{k}(1-\sigma^K_{k})\frac{\theta_{k-1}/\theta_0}{1-\theta_{k-1}/\theta_0} \geq d_k \geq 
b_{k+1}(1-\sigma^K_{k+1})\frac{(1-\theta_k)\theta_k/\theta_0(1-\theta_0)}{1-\theta_k/\theta_0} \\
\overset{eq. in \eqref{b:x}}{\Leftrightarrow} \sigma^K_{k} \leq 1 - (1-\theta_0)\frac{1/\theta_{k-1} - 1/\theta_0}{1/\theta_k - 1/\theta_0}
\end{align*}
which is always true because $\sigma^K_k \leq \theta_0$ for all $k$.

Now that we have the sequence, we can compute the rate as
\begin{align*}
\rho_K & = \frac{b_0 + c_0}{b_K} = \frac{1}{\theta_0^2} \frac{\theta_{K-1}^2}{\theta_{-1}^2}\prod_{l=1}^{K} (1-\sigma^K_{l}) \theta_0^2 \Big(1 + \sum_{l=0}^{K-1} \frac{\theta_{-1}^2}{\theta_{l}}\prod_{j=1}^{l} \frac{1}{1-\sigma^K_{j}}\Big)\\
& =\frac{\theta_{K-1}^2}{\theta_{-1}^2} \Big(\prod_{l=1}^{K} (1-\sigma^K_{l})  + \sum_{l=0}^{K-1} \frac{\theta_{-1}^2}{\theta_{l}}\prod_{j=l+1}^{K} (1-\sigma^K_{j})\Big)
\end{align*}

Hence, we obtain that
\[
\Exp[\Delta(x_K)] \leq
\frac{\theta_{K-1}^2}{\theta_{-1}^2} \Big(\prod_{l=1}^{K} (1-\sigma^K_{l})  + \sum_{l=0}^{K-1} \frac{\theta_{-1}^2}{\theta_{l}}\prod_{j=l+1}^{K} (1-\sigma^K_{j})\Big) \Delta(x_0) = \rho_K \Delta(x_0)
\]
where $\theta_{-1}^2 = \frac{\theta_0^2}{1-\theta_0}$ and
\begin{align*}
\sigma^K_k &= \frac{1 - \frac{\theta_k}{\theta_{k-1}}\frac{1-\theta_0}{1-\theta_k}}{1 + \frac{\theta_{k-1}}{\theta_0 \mu}}, \qquad k \in \{1, K-1\} \\
\sigma^K_K &= \frac{1 - \frac{\theta_{K-1}}{\theta_0}(1-\theta_0)}{1 + \frac{\theta_{K-1}}{\theta_0\mu}}.
\end{align*}

{\em Step 4: bound $\rho_K$ by induction.}
\begin{align*}
\rho_1 &= \frac{\theta_{0}^2}{\theta_{-1}^2} \Big((1-\sigma_{1}^1)  +  \frac{\theta_{-1}^2}{\theta_{0}}(1-\sigma_{1}^1)\Big) = (1-\theta_0)\frac{\frac{\theta_0}{\theta_0\mu} + \frac{\theta_0}{\theta_0}(1-\theta_0)}{1 + \frac{\theta_0}{\theta_0 \mu}}(1+\frac{\theta_0}{1-\theta_0}) \\
&= \frac{1 + (1-\theta_0)\mu}{1+\mu} \leq \frac{1 + (1-\theta_0)\mu}{1+\frac{\theta_0^2}{2\theta_0^2}\mu}
\end{align*}
Let us now assume that $\rho_K \leq \frac{1 + (1-\theta_0)\mu}{1+\frac{\theta_0^2\mu}{2\theta_{K-1}^2}}$.
\begin{align*}
\rho_{K+1} &= \frac{\theta_{K}^2}{\theta_{-1}^2} \Big(\prod_{l=1}^{K+1} (1-\sigma^{K+1}_{l})  + \sum_{l=0}^{K} \frac{\theta_{-1}^2}{\theta_{l}}\prod_{j=l+1}^{K+1} (1-\sigma^{K+1}_{j})\Big) \\
&= \frac{\theta_{K}^2}{\theta_{K-1}^2}\frac{\theta_{K-1}^2}{\theta_{-1}^2}(1-\sigma_{K+1}^{K+1})\frac{1-\sigma_{K}^{K+1}}{1-\sigma_K^K} \Big(\prod_{l=1}^{K} (1-\sigma^{K}_{l})  + \sum_{l=0}^{K-1} \frac{\theta_{-1}^2}{\theta_{l}}\prod_{j=l+1}^{K} (1-\sigma^{K}_{j}) \\
& \qquad \qquad+ \frac{\theta_{-1}^2}{\theta_{K}} \frac{1-\sigma^{K}_{K}}{1-\sigma_K^{K+1}} \Big) \\
& = (1-\theta_K) (1-\sigma_{K+1}^{K+1})\frac{1-\sigma_{K}^{K+1}}{1-\sigma_K^K} \Big(\rho_K +  \frac{\theta_{K-1}^2}{\theta_K}\frac{1-\sigma^{K}_{K}}{1-\sigma_K^{K+1}} \Big) \\
& = (1-\theta_K) \frac{1+ \frac{\theta_0 \mu}{\theta_{K}}(1-\theta_0)}{1+\frac{\theta_0 \mu}{\theta_{K}}} \rho_K + \frac{1 + (1-\theta_0) \mu}{1+ \frac{\theta_0 \mu}{\theta_K}}\theta_K \\
\end{align*}
We now use the induction hypothesis.
\begin{align*}
\rho_{K+1} & \leq (1-\theta_K) \frac{1+ \frac{\theta_0 \mu}{\theta_{K}}(1-\theta_0)}{1+\frac{\theta_0 \mu}{\theta_{K}}}  \frac{1 + (1-\theta_0)\mu}{1+\frac{\theta_0^2}{2\theta_{K-1}^2}\mu}+ \frac{1 + (1-\theta_0) \mu}{1+ \frac{\theta_0 \mu}{\theta_K}}\theta_K \\
& \leq \frac{1 + (1-\theta_0)\mu}{1+\frac{\theta_0^2\mu}{2\theta_{K}^2}}
\end{align*}
The last inequality comes from the fact that 
\begin{align*}
 &\frac{1+ \frac{\theta_0 \mu}{\theta_{K}}(1-\theta_0)}{1+\frac{\theta_0 \mu}{\theta_{K}}}  \frac{1-\theta_K}{1+\frac{\theta_0^2\mu}{2\theta_{K-1}^2}}+ \frac{\theta_K}{1+ \frac{\theta_0 \mu}{\theta_K}} \leq \frac{1}{1+\frac{\theta_0^2\mu}{2\theta_{K}^2}} \\
 & \Leftrightarrow (1-\theta_K) (1+ \frac{\theta_0 \mu}{\theta_{K}}(1-\theta_0)) (1+\frac{\theta_0^2\mu}{2\theta_{K}^2}) + \theta_K (1+\frac{\theta_0^2\mu}{2\theta_{K}^2})(1+\frac{\theta_0^2\mu}{2\theta_{K-1}^2}) \\
 & \qquad \qquad\leq (1+\frac{\theta_0^2\mu}{2\theta_{K-1}^2})(1+\frac{\theta_0 \mu}{\theta_{K}}) \\
 & \Leftrightarrow 0 + \mu\Big[ (1-\theta_K)\frac{\theta_0(1-\theta_0)}{\theta_{K}} + \frac{(1-\theta_K)\theta_0^2}{2\theta_{K}^2} + \frac{\theta_K\theta_0^2}{2\theta_{K-1}^2} + \frac{\theta_K\theta_0^2}{2 \theta_{K}^2} - \frac{\theta_0^2}{2 \theta_{K-1}^2} - \frac{\theta_0}{\theta_K}\Big] \\
 & \qquad \qquad + \mu^2 \Big[ \frac{(1-\theta_K)\theta_0^3(1-\theta_0)}{2 \theta_K^2 \theta_{K}} + \frac{\theta_K\theta_0^4}{4 \theta_{K-1}^2 \theta_K^2} - \frac{\theta_0^3}{2 \theta_{K-1}^2\theta_K} \Big] \leq 0
\end{align*}

and both terms in the brackets are nonpositive for all $K$.
\end{proof}

%\section*{Acknowledgement}
%
%We would like to thank Peter Richt\'arik for his
%useful advice at the beginning of this project.

\bibliographystyle{alpha}
\bibliography{../../literature}

\end{document}